\newcommand{\vp}{\Phi}
 \newcommand{\irnn}{\int_{\Omega_{n}(t)}}
\newcommand{\lz}{L_N}
\newcommand{\mq}{M_q}
\newcommand{\fp}{f^\prime(r)}
\newcommand{\fr}{f(r)}
 \numberwithin{equation}{section}
\newcommand{\ra}{\rightarrow}
\newcommand{\ve}{\varepsilon}
\newcommand{\pt}{\partial_t}
\newcommand{\RN}{\mathbb{R}^N}
\newcommand{\irn}{{\int_{\RN}}}
\newcommand{\rn}{\mathbb{R}^N}
\newcommand{\irnt}{\int_{ Q_T}}
\newcommand{\pxj}{\partial_{x_j}}
\newcommand{\uo}{u^{(0)}}
\newcommand{\iqt}{\int_{Q_T}}
\newcommand{\vpi}{\varphi_i}
\newcommand{\wnr}{\|w\|_{r,Q_T}}
\newtheorem{theorem}{Theorem}[section]
\newtheorem{lemma}[theorem]{Lemma}
\newtheorem{clm}[theorem]{Claim}
\theoremstyle{definition}
\title[the initial value problem for the Nernst-Planck-Navier-Stokes system
] %Use the shortened version of the full title
      { Global existence of a strong solution to the initial value problem for the Nernst-Planck-Navier-Stokes system in high space dimensions}
\author[Xiangsheng Xu]{}
\subjclass{Primary: 35Q30; 35Q35; 35Q92; 35B65.}
 \keywords{ Electrochemical transport and diffusion; global strong
 	solution;  De Giorgi iteration scheme; interpolation inequality; scaling of variables.}
 \email{xxu@math.msstate.edu}
\begin{document}

%\maketitle	
\maketitle

% Enter the first author's name and address:
\centerline{\scshape Xiangsheng Xu}
\medskip
{\footnotesize
% please put the address of the first author
 \centerline{Department of Mathematics \& Statistics}
   \centerline{Mississippi State University}
   \centerline{ Mississippi State, MS 39762, USA}
} % Do not forget to end the {\footnotesize by the sign }

	\begin{abstract}We study the existence of a strong solution to the initial value problem for the Nernst-Planck-Navier-Stokes (NPNS) system  in $\mathbb{R}^N, N\geq 3$. 
		The system describes the electrodiffusion of ions in a
		viscous Newtonian fluid.  A strong solution is obtained in any dimension of space without constraints on the number of species or the size of the given data.%Our investigation shows that a weak solution turns into a  strong one whenever its integrability can be raised to a certain level. %improved appropriately.
	%	it lies in some $L^q$ space for $q$ suitably large.%is sufficiently high degree of integrability.% without any smallness assumptions on the initial data. 
		%velocity lies in $(L^2(\mathbb{R}^N))^N\cap (L^\infty(\mathbb{R}^N))^N$.%, thereby solving one of the seven millennium problems proposed by the Clay Mathematical Institute. % derive  conditions under which the velocity field is uniformly bounded for all time. %Both conditions involve the viscosity and the $L^\infty$ norm and the $L^2$ norm of the initial velocity.
	%Our investigation reveals that one can obtain a bounded velocity by  assuming that the reciprocal of the viscosity times the sum of the $L^\infty$ norm and the  norm of the initial velocity is suitably small. 
	%	either the viscosity is large enough or the initial velocity is suitably small in the space $L^\infty\cap L^2$ \cite{F}.  

\end{abstract}
\bigskip

%The abstract of your paper

%The title of your section 1 establish a lower bound
\section{Introduction}
In this paper we investigate the existence of a strong solution to the initial value problem 
\begin{eqnarray}
	\pt u+(u\cdot\nabla) u+\nabla p&=&\Delta u-\Psi\nabla\phi\ \ \mbox{in $\rn\times(0,T)\equiv Q_T$},\label{nsf1}\\
	\pt c_i+\nabla\cdot(c_iu)&=&\Delta c_i+\nabla\cdot\left(z_ic_i\nabla\phi\right)\ \ \mbox{in $ Q_T$},\  i=1,\cdots, I,\label{nsf2}\\
	-\Delta\phi&=&\Psi\ \ \mbox{in $ Q_T$},\label{nsf3}\\
	\Psi&=&\sum_{i=1}^{I}z_ic_i,\label{nsfa}\\
	\nabla\cdot u&=&0\ \ \mbox{in $ Q_T$},\label{nsf4}\\
	u(x,0)&=& u^{(0)}(x),\ \ c_i(x,0)=c_i^{(0)}(x)\ \ \mbox{on $\rn$}\label{nsf5}.
\end{eqnarray}
%with given $T, \nu\in (0,\infty)$ and the initial data $v^{(0)}(x)$. %, whose precise assumptions will be made later.
%  Physically, the problem describes the flow of a fluid occupying $\rn$. In this case,  $v=\left(v_1,\cdots, v_N\right)^T$ is the velocity of the fluid,  $p$ is the pressure of the fluid, and $\nu$ is the viscosity.
%The Nernst-Planck-Navier-Stokes (NPNS) system, which 
This problem can be used to describe the transport and diffusion of ions in electrolyte solutions. In this case, $I$ is the number of  ionic species. For each $i\in\{1, \cdots, I\}$
$c_i$ is the ionic concentration of the $i$-th specie and $z_i$ is the corresponding valences. The vector field  $u \in \rn$
is fluid velocity, $p$ is
the pressure, and $\phi$ is the electric potential. The system \eqref{nsf1}-\eqref{nsf4} is often called the Nernst-Planck-Navier-Stokes (NPNS) system. It appears 
%plays an important role 
in the study of many physical and
biological processes \cite{ABL,BB}. Examples are  ion particles in the electrokinetic fluids \cite{ EHL,JJ} and
ion channels in cell membranes \cite{BTA,EI}, to name a couple. We refer the reader to \cite{R} for more information on the physical and biological relevance of the system.

Mathematical analysis of the NPNS system has attracted a lot of attentions recently. Most of the existing research deals with the case where the system is posed on a bounded domain with various types of boundary conditions. See \cite{CI,CIL,SC} and the references therein. However, problems with unbounded domains present different mathematical challenges from those with bounded ones. The initial value problem such as ours was first considered in \cite{J}, where local existence of a smooth solution was established via analytic semi-group theory. The so-called energy dissipation equalities associated with the system were obtained in \cite{LW}, from  which  a global-in-time weak solution was constructed. The objective of this paper is to improve on the regularity of the weak solution. Our main result is the following
\begin{theorem}\label{thm}Assume that
	\begin{eqnarray}
		|\uo|&\in &L^{\infty}(\rn)\cap  L^{2}(\rn)\ \ \mbox{and}\ \ \nabla\cdot\uo=0,\label{uoc}\\
		c_i^{(0)}&\in &L^{\infty}(\rn)\cap  L^{1}(\rn)\ \ \mbox{with} \ \ 	c_i^{(0)}\geq 0, \  i=1,\cdots, I.\label{cic}
		%\cap L\log L(\rn)
	\end{eqnarray}
A result in  \cite{J} asserts that  there exists a local-in-time strong solution  $(c_1,\cdots, c_I, u, \phi)$ to \eqref{nsf1}-\eqref{nsf5}   with $c_i\geq 0$. Define
\begin{equation}\label{wdef}
	w=\sum_{i=1}^{I}c_i. %	\nonumber
\end{equation}
Then for each $\lz\geq 1$ there exist two positive numbers $C=C\left(N,z_1, \cdots, z_I, \lz\right)$ and $s_5=s_5(N, \lz)$ such that
%then there exist two positive numbers $\beta_0=\beta_0(N)$ and $ c=c\left(N, T, z_i,\|c_i^{(0)}\|_{1,\rn},\|c_i^{(0)}\|_{\infty,\rn} \right)$ such that
\begin{eqnarray}
		\|w\|_{\infty, Q_T}&\leq& 16I\|w(\cdot,0)\|_{\infty, \rn}+	C\|w\|_{\lz, Q_T}^{s_5}.\label{cb}
	%	\
%	
		\end{eqnarray}	
%	Moreover,
	%	i.e., a weak solution turns into a strong one under the further assumption \eqref{wdef}.
%Here and in what follows the constant $c$ depends only on $N$, $T$, $z_i$, $\|w\|_{\lz, Q_T}$, and the initial data via the norms of the function spaces in \eqref{uoc} and \eqref{cic}. The dependence of $c$ on $T$ is such that $c$ becomes unbounded only when $T\ra \infty$, or $\|w\|_{\lz, Q_T}=\infty$.%can be described by a polynomial of $T^{\gamma_0}$ for some $\gamma_0=\gamma_0(N)>0$.%is such that $c(T)
\end{theorem}
Regularity properties of a global-in-time  weak solution as constructed in \cite{LW} are rather poor ( see \eqref{ee2} below).
As we shall see in the next section,   we do have
\begin{equation}\label{wes}
	\|w\|_{\frac{N+2}{N}, Q_T} \leq  c.% =c\left( \right).\nonumber
\end{equation} 
Here the constant $c$ is determined by $N, T, z_i,\|c_i^{(0)}\|_{1,\rn},\|c_i^{(0)}\|_{\infty,\rn}$, and $\|\uo\|_{2,\rn}$. Moreover, the dependence of  $c$ on $T$ is such that it becomes unbounded only when $T$ goes to infinity.
We can take $\lz=\frac{N+2}{N}$ in \eqref{cb} to derive that to each $T>0$ there corresponds a $c$ such that
\begin{equation}\label{cb1}
		\|w\|_{\infty, Q_T}\leq c.
\end{equation}
%. Our result implies that there is a strong solution to \eqref{nsf1}-\eqref{nsf5} when the space dimension is $2$. Note that our proof will be carried out under the assumption $N\geq 3$. However, the proof can easily be adjusted for $N=2$. In fact, the only place where we must make a distinction between $N=2$ and $N\geq 3$ is when we apply the Sobolev embedding theorem. 
That is to say, $w$ never blows up in finite time. This, along with 
 \eqref{nsf3} and \eqref{il2} from  below, puts us in a position to apply the classical Calder\'{o}n-Zygmund estimate. Upon doing so,  we can obtain  that for each $s>1$ there is a constant $c$ with
\begin{equation}
	\sup_{0\leq t\leq T}	\|\phi(\cdot,t)\|_{W^{2,s}(\rn)}\leq c.\label{pls}
\end{equation}
Combing this with \eqref{cb1}, we further establish
\begin{equation}
	\|u\|_{\infty, Q_T}\leq c.\label{ub}
\end{equation}

 A strong solution is understood to be a weak one, as defined in \cite{LW}, with the additional properties \eqref{cb1}-\eqref{ub}.
 Obviously, under \eqref{cb1}-\eqref{ub} higher regularity of the solution can be obtained via a bootstrap argument. In fact, a strong solution can be shown to satisfy system \eqref{nsf1}-\eqref{nsf3} in the a.e. sense \cite{OP}. We will not pursue the details here. Once again, the key to our approach is that the constants $C$ and $s_5$ in \eqref{cb} do not depend $T$. This implies that a local-in-time strong solution never blows up in finite time. As a result, it can be extended as a global strong solution. Therefore, our estimate \eqref{cb} bridges the gap between a weak solution and a strong one.

Note that if $I=2$ then \eqref{cb} was already obtained in \cite{LW}. However, as noted in the article, the method employed there cannot be extended to the case where $I>2$. Also see \cite{ZL}. 

Our approach is based upon an idea  developed by the author in \cite{X, X1}. It combines suitable scaling of the dependent variables with a De Giorgi iteration scheme. It seems to be very effective in dealing with the type of non-linearity appearing in \eqref{nsf2}. %However, as we shall see, the non-linearity in \eqref{nsf2} is worse than the one in the incompressible Navier-Stokes equations as far as the regularity issue is concerned. This explains the assumption in \eqref{wdef}.%Our development here further validates the approach in \cite{X}. 

This work is organized as follows. In Section \ref{sec2}, we collect some relevant known results, while Section \ref{sec3} is devoted to the proof of Theorem \ref{thm} and \eqref{ub}.

\section{Preliminary results}\label{sec2} In this section, we first make some preliminary analysis on \eqref{nsf1}-\eqref{nsf5}. Then we state a couple of relevant known results.

%As we indicated earlier, we wish to show that  solution 
From here on we shall assume that our solution is a a local-in-time strong one. There are different ways one can establish the existence of such a solution. We already mention \cite{J}.  Our objective is to show that such a solution never blows up in finite time. 
%First, 
%As shown in \cite{LW}, a solution to \eqref{nsf1}-\eqref{nsf5} can be constructed as a limit of a sequence of smooth solutions to certain approximate problems. In view of the recent result in \cite{X}, we can design another approximation scheme by cutting off $\Psi$. That is, we approximate $\Psi$ by a sequence of bounded functions. This will lead to the existence of very ``nice'' approximate solutions. In the subsequent calculations we may assume that our solutions are these smooth approximating ones.

The following lemma is a consequence of the energy dissipation equalities in \cite{LW}.
\begin{lemma}We have
%	This combined with \eqref{ee1} yields
	\begin{eqnarray}
		\lefteqn{\sup_{0\leq t\leq T}\left(\irn|u|^2dx+\irn|\nabla\phi|^2dx\right)+\irnt|\nabla u|^2dxdt+\irnt \Psi^2dxdt}\nonumber\\
		&&+\irnt  \sum_{i=1}^{I}z_i^2c_i|\nabla\phi|^2 dxdt\leq c\left(\|\uo\|_{2,\rn}^2+\left\|\sum_{i=1}^{I}z_ic_i^{(0)}\right\|_{\frac{2N}{N+2},\rn}^2\right).\label{ee2}
	\end{eqnarray}
		Here and in what follows the letter $c$, unless otherwise stated, denotes a generic positive constant that depends on $ I, N$, and $z_i$, i.e.,
	\begin{equation}\label{c0}
		c=c( I, N, z_i).
	\end{equation}
\end{lemma}
\begin{proof} We easily verify that
	\begin{equation*}%\label{uqu}
		(u\cdot\nabla) u\cdot u=\frac{1}{2}(u\cdot\nabla)|u|^2.
	\end{equation*}
This together with \eqref{nsf4} implies
\begin{equation*}
		\irn (u\cdot\nabla) u\cdot u\ dx=0.
\end{equation*}	
Similarly,
\begin{equation*}
	\irn (u\cdot\nabla) p\ dx=0.
\end{equation*}
With these in mind, we use $u$ as a test function in \eqref{nsf1} to deduce
\begin{equation}\label{nsf19}
	\frac{1}{2}\frac{d}{dt}\irn|u|^2dx+\irn|\nabla u|^2dx=-\irn \Psi\nabla\phi\cdot udx.
\end{equation}
To estimate the term on the right-hand side, we differentiate \eqref{nsf3} with respect to $t$ and use $\phi$ as a test function in the resulting equation to obtain
\begin{equation}\label{nsf15}
	\frac{1}{2}\frac{d}{dt}\irn|\nabla\phi|^2dx=\irn \pt\Psi\phi dx.
\end{equation}
Next, we use $z_i\phi$ as a test function in \eqref{nsf2} to get
\begin{eqnarray*}
	z_i\irn \phi\pt c_idx&=&z_i\irn c_iu\cdot  \nabla\phi dx-z_i\irn\nabla c_i\cdot  \nabla\phi dx-z_i^2\irn c_i|\nabla\phi|^2 dx\nonumber\\
	&=&z_i\irn c_iu\cdot  \nabla\phi dx-z_i\irn \Psi c_idx-z_i^2\irn c_i|\nabla\phi|^2 dx.%\label{nsf17}
\end{eqnarray*}
The last step is due to \eqref{nsf3}.
 Sum up the equations  over $i$ to derive
\begin{equation*}
	\irn\pt\Psi\phi dx=\irn \Psi u\cdot \nabla\phi dx-\irn \Psi^2dx-\irn  \sum_{i=1}^{I}z_i^2c_i|\nabla\phi|^2 dx.
\end{equation*}
Substitute this into \eqref{nsf15} and add the resulting equation to \eqref{nsf19} to deduce
\begin{eqnarray*}
	\lefteqn{\frac{1}{2}\frac{d}{dt}\irn|u|^2dx+\irn|\nabla u|^2dx+\frac{1}{2}\frac{d}{dt}\irn|\nabla\phi|^2dx}\nonumber\\
	&&+\irn \Psi^2dx+\irn  \sum_{i=1}^{I}z_i^2c_i|\nabla\phi|^2 dx=0.
\end{eqnarray*}
After an integration, we arrive at
\begin{eqnarray}
	\lefteqn{\sup_{0\leq t\leq T}\left(\irn|u|^2dx+\irn|\nabla\phi|^2dx\right)+\irnt|\nabla u|^2dxdt+\irnt \Psi^2dxdt}\nonumber\\
	&&+\irnt  \sum_{i=1}^{I}z_i^2c_i|\nabla\phi|^2 dxdt\leq \frac{3}{2}\left(\irn|\uo(x)|^2dx+\irn|\nabla\phi(x,0)|^2dx\right).\label{ee1}
\end{eqnarray}
To bound the last term in the above inequality, we let $t=0$ in \eqref{nsf3} to get
\begin{equation*}
	-\Delta \phi(x,0)=\Psi(x,0)=\sum_{i=1}^{I}z_ic_i^{(0)}(x).
\end{equation*}
Use $\phi(x,0)$ as a test function to deduce
\begin{eqnarray}
	\irn|\nabla\phi(x,0)|^2dx&=&\irn \sum_{i=1}^{I}z_ic_i^{(0)}(x)\phi(x,0)dx\nonumber\\
	&\leq&\left(\irn\left|\sum_{i=1}^{I}z_ic_i^{(0)}(x)\right|^{\frac{2N}{N+2}}dx\right)^{\frac{N+2}{2N}}\left(\irn|\phi(x,0)|^{\frac{2N}{N-2}}dx\right)^{\frac{N-2}{2N}}.\label{dis}
\end{eqnarray}
Recall that the Sobolev inequality in the whole space asserts
\begin{equation}\label{sob}
	\|f\|_{\frac{2N}{N-2},\rn}\leq c(N)\|\nabla f\|_{2,\rn}\ \ \mbox{for each $f\in H^1(\rn)$}.
\end{equation}
This together with \eqref{dis} implies
\begin{equation*}
	\|\nabla\phi(\cdot,0)\|_{2,\rn}\leq c\left\|\sum_{i=1}^{I}z_ic_i^{(0)}\right\|_{\frac{2N}{N+2},\rn}.
\end{equation*}
Substitute this into \eqref{ee1} to complete the proof. Note that the right-hand side of \eqref{ee2} is finite due to our assumptions \eqref{uoc} and \eqref{cic}.
\end{proof}

\begin{proof}[Proof of \eqref{wes}] A similar result was obtained in \cite{LW} under some additional assumptions on the initial data, which will be removed here.
	
	Integrate \eqref{nsf2} over $\rn$ to get
	\begin{equation}\label{il1}
		\frac{d}{dt}\irn c_i\ dx=0\ \ \mbox{for $i=,\cdots, I$,}
	\end{equation}
from whence follows
\begin{equation}\label{il2}
	\sup_{0\leq t\leq T}\irn c_idx\leq \irn c_i^{(0)}dx.
\end{equation}

Before we continue, we must point out that $\ln c_i$  is not a legitimate test function for \eqref{nsf2} because it may not be bounded away from $0$ below. As a result, it is no longer a Sobolev function of the space variables. This point seems to have been overlooked in \cite{LW}. However,
for each $\ve>$ the function $\ln(c_i+\ve)-\ln\ve$ is. Upon using it, we
%Use $\ln^+c_i$ as a test function in \eqref{nsf2} to
 derive
\begin{eqnarray}
	\lefteqn{	\frac{d}{dt}\irn\int_{0}^{c_i}[\ln(\mu+\ve)-\ln\ve]d\mu dx+\irn\frac{1}{c_i+\ve}|\nabla c_i|^2dx}\nonumber
	\\
	&=&\irn \frac{c_i}{c_i+\ve}(u\cdot\nabla) c_idx-z_i\irn \frac{c_i}{c_i+\ve}\nabla\phi\cdot\nabla c_idx.\label{ces1}
\end{eqnarray}
We can infer from \eqref{nsf4} that
\begin{equation*}
	\irn \frac{c_i}{c_i+\ve}(u\cdot\nabla) c_idx=\irn (u\cdot\nabla)\int_{0}^{c_i}\frac{s}{s+\ve}dsdx=0.
\end{equation*}
The last term in \eqref{ces1} can be estimated as follows:
\begin{eqnarray*}
-z_i\irn \frac{c_i}{c_i+\ve}\nabla\phi\cdot\nabla c_idx
	&\leq&\frac{1}{2}\irn\frac{1}{c_i+\ve}|\nabla c_i|^2dx+\frac{z_i^2}{2}\irn c_i|\nabla\phi|^2dx.
\end{eqnarray*}
Use the preceding two results in \eqref{ces1} and integrate the resulting inequality with respect to $t$ to derive
\begin{equation}\label{ces3}
\irnt\frac{1}{c_i+\ve}|\nabla c_i|^2dxdt\leq c\irn\int_{0}^{c_i^{(0)}}[\ln(\mu+\ve)-\ln\ve]d\mu dx	+cz_i^2\irnt c_i|\nabla\phi|^2dxdt.
\end{equation}
Here we have used the fact that
$$\int_{0}^{c_i}[\ln(\mu+\ve)-\ln\ve]d\mu \geq 0.$$
Note that
$$\ln(\mu+\ve)-\ln\ve=\ln\left(1+\frac{\mu}{\ve}\right)\leq \frac{\mu}{\ve}\ \ \mbox{for $\mu\geq0$ }.$$
This together with \eqref{ee2} and \eqref{cic} implies that 
\begin{equation}
	\mbox{the right hand side of \eqref{ces3}}\leq c(\ve).\nonumber
\end{equation} 
Obviously, $c(\ve) $ here depends on $\|c_i^{(0)}\|_{1,\rn},\|c_i^{(0)}\|_{\infty,\rn}$, and $\|\uo\|_{2,\rn}$ due to the upper bound in \eqref{ee2}.
%is less that or equal to $c$ for each fixed $\ve$. 
Unfortunately, $c(\ve)$ blows up as $\ve\ra 0$. This will cause some complications. To circumvent them, 
we easily see that
$$\sqrt{c_i+\ve}-\sqrt{\ve}=\frac{c_i}{\sqrt{c_i+\ve}+\sqrt{\ve}}\leq \sqrt{c_i}$$
With this, 
 \eqref{sob}, \eqref{ces3}, and \eqref{il1} in mind, we calculate that
\begin{eqnarray}
\lefteqn{\int_{Q_T}\left(\sqrt{c_i+\ve}-\sqrt{\ve}\right)^{\frac{4}{N}+2}dxdt}\nonumber\\	&\leq&\int_{0}^{T}\left(\irn\left(\sqrt{c_i+\ve}-\sqrt{\ve}\right)^2dx\right)^{\frac{2}{N}}\left(\irn\left(\sqrt{c_i+\ve}-\sqrt{\ve}\right)^{\frac{2N}{N-2}}dx\right)^{\frac{N-2}{N}}dt\nonumber\\
	&\leq&\left(\sup_{0\leq t\leq T}\irn c_idx\right)^{\frac{2}{N}}\int_{0}^{T}\left(\irn\left(\sqrt{c_i+\ve}-\sqrt{\ve}\right)^{\frac{2N}{N-2}}dx\right)^{\frac{N-2}{N}}dt\nonumber\\
	&\leq&c\left(\sup_{0\leq t\leq T}\irn c_idx\right)^{\frac{2}{N}}\int_{0}^{T}\irn \frac{1}{c_i+\ve}|\nabla c_i|^2dxdt
	\leq c.\label{ns4}
\end{eqnarray}
It is elementary to show that
$$c_i=\left(\sqrt{c_i+1}-1\right)^2+2\left(\sqrt{c_i+1}-1\right)\leq 2\left(\sqrt{c_i+1}-1\right)^2+1,$$
from whence it follows that
\begin{eqnarray}
	\irnt c_i^{\frac{N+2}{N}}dxdt&=&\int_{\{c_i\leq 1\}}c_i^{\frac{N+2}{N}}dxdt+\int_{\{c_i> 1\}}c_i^{\frac{N+2}{N}}dxdt\nonumber\\
	&\leq&\irnt c_idxdt+c\irnt\left(\sqrt{c_i+1}-1\right)^{\frac{2(N+2)}{N}} dxdt+c|\{c_i> 1\}|\nonumber\\
	&\leq&cT+c.\label{ces2}
\end{eqnarray}
The last step is due to \eqref{il2} and \eqref{ns4}.
\end{proof}
We would like to remark that estimate \eqref{ces2} is the only place where the constant $c$ depends on $T$. But this does not affect our global existence because the constant blows up only when $T\ra \infty$. %Obviously, this is due to the fact that we cannot obtain better estimates for $c_i$ than \eqref{il2} when they are near $0$.

The following lemma is the foundation of a De Giorgi iteration scheme, whose proof can be found in (\cite{D}, p.12).
\begin{lemma}\label{ynb}
	Let $\{y_n\}, n=0,1,2,\cdots$, be a sequence of positive numbers satisfying the recursive inequalities
	\begin{equation*}
		y_{n+1}\leq cb^ny_n^{1+\alpha}\ \ \mbox{for some $b>1, c, \alpha\in (0,\infty)$.}
	\end{equation*}
	If
	\begin{equation*}
		y_0\leq c^{-\frac{1}{\alpha}}b^{-\frac{1}{\alpha^2}},
	\end{equation*}
	then $\lim_{n\rightarrow\infty}y_n=0$.
\end{lemma}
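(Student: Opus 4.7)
The plan is to prove by induction that $y_n \leq y_0 \, b^{-n/\alpha}$ for every $n \geq 0$. Since $b>1$ and $\alpha>0$, this bound immediately yields $\lim_{n\to\infty} y_n = 0$, so everything reduces to establishing it.

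To locate the correct decay rate, I first do a scratch computation with the ansatz $y_n \leq A r^n$ and substitute into the recursion:
\begin{equation*}
y_{n+1} \leq c b^n (A r^n)^{1+\alpha} = c A^{1+\alpha}\bigl(b r^{\alpha}\bigr)^n r^n.
\end{equation*}
For the right-hand side to be bounded by $A r^{n+1}$ uniformly in $n$, the factor $(br^{\alpha})^n$ cannot grow, which forces the choice $r = b^{-1/\alpha}$; the remaining requirement then reads $c A^{\alpha} \leq r = b^{-1/\alpha}$, equivalently $A \leq c^{-1/\alpha} b^{-1/\alpha^2}$. This is precisely the hypothesis on $y_0$, so taking $A = y_0$ makes the ansatz self-consistent.

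With this in hand the formal induction is routine: the base case $n=0$ is tautological, and assuming $y_n \leq y_0 b^{-n/\alpha}$ the recursive inequality gives
\begin{equation*}
y_{n+1} \leq c b^n y_0^{1+\alpha} b^{-n(1+\alpha)/\alpha} = \bigl(c y_0^{\alpha}\bigr) y_0 \, b^{-n/\alpha} \leq y_0 \, b^{-(n+1)/\alpha},
\end{equation*}
where the last inequality is exactly the smallness assumption $c y_0^{\alpha} \leq b^{-1/\alpha}$. The only nontrivial step in the whole proof is guessing the correct exponent $-1/\alpha$ in the decay rate; the scratch computation above pins it down by force, since any slower geometric rate would be defeated by the $b^n$ factor in the recursion, while the hypothesis on $y_0$ is calibrated precisely so that the critical rate actually works.
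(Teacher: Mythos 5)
Your proof is correct, and it is essentially the standard argument: the paper does not prove this lemma itself but cites DiBenedetto (\cite{D}, p.~12), whose proof is exactly your induction $y_n\leq y_0\, b^{-n/\alpha}$ with the hypothesis on $y_0$ calibrated to close the inductive step. Nothing further is needed.
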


 The following lemma is essentially a consequence of the interpolation inequality for $L^q$ norms (\cite{GT}, p.146). It says that when we decrease  q in the $L^q$-norm its exponent increases in the situation under our consideration.%As we shall see, it provides a way to decrease certain exponents in our nonlinear terms.
\begin{lemma}\label{prop}
	Let $f\in L^\ell(Q_T)\cap L^\infty(Q_T)$ for some $\ell\geq 1$. Assume that there exist $q\in (\ell, \infty)$, $\delta>0$, and $c>0$ such  that
	\begin{equation*}%\label{pro3}
		\|f\|_{\infty,Q_T}\leq c	\|f\|_{q,Q_T}^{1+\delta}.
	\end{equation*}
	If
	\begin{equation}\label{pro2}
		\delta<\frac{\ell}{q-\ell},
	\end{equation}	then
	\begin{equation}\label{pro1}
		\|f\|_{\infty,Q_T}\leq c^{\frac{q}{\ell(1+\delta)-q\delta}}\|f\|_{\ell,Q_T}^{\frac{\ell(1+\delta)}{\ell(1+\delta)-q\delta}}.
	\end{equation}
\end{lemma}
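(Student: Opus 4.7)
The plan is to apply the standard log-convexity (interpolation) inequality for $L^p$ norms cited from Gilbarg--Trudinger: since $\ell < q < \infty$ and $f \in L^\ell(Q_T)\cap L^\infty(Q_T)$, one has
$$\|f\|_{q,Q_T} \;\leq\; \|f\|_{\ell,Q_T}^{\ell/q}\,\|f\|_{\infty,Q_T}^{1-\ell/q}.$$
Substituting this directly into the hypothesis $\|f\|_{\infty,Q_T}\leq c\,\|f\|_{q,Q_T}^{1+\delta}$ produces
$$\|f\|_{\infty,Q_T} \;\leq\; c\,\|f\|_{\ell,Q_T}^{\ell(1+\delta)/q}\,\|f\|_{\infty,Q_T}^{(1-\ell/q)(1+\delta)}.$$

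The next step is to absorb the power of $\|f\|_{\infty,Q_T}$ on the right into the left. This is legitimate precisely when the exponent $(1-\ell/q)(1+\delta)$ is strictly less than $1$, which rearranges to $q\delta < \ell(1+\delta)$, that is, $\delta(q-\ell)<\ell$. This is exactly the standing assumption \eqref{pro2}, and it is the only place where \eqref{pro2} enters the argument. Note also that the absorption is honest because $\|f\|_{\infty,Q_T}$ is finite by assumption, so one is not manipulating an infinite quantity.

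Finally, subtracting the exponent and raising both sides to the reciprocal power $q/[\ell(1+\delta)-q\delta]$ yields \eqref{pro1}, after simplifying the exponent of $\|f\|_{\ell,Q_T}$ to $\ell(1+\delta)/[\ell(1+\delta)-q\delta]$ and the exponent of $c$ to $q/[\ell(1+\delta)-q\delta]$. There is no genuine analytic obstacle: the lemma is essentially a bookkeeping exercise in exponents, and the substantive content is the identification of \eqref{pro2} as the precise condition allowing the self-improving inequality to close.
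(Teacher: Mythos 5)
Your proof is correct and follows essentially the same route as the paper: the paper's display \eqref{pro4} is precisely the interpolation bound $\|f\|_{q,Q_T}\leq\|f\|_{\ell,Q_T}^{\ell/q}\|f\|_{\infty,Q_T}^{1-\ell/q}$ (obtained by writing $|f|^{q}=|f|^{q-\ell}|f|^{\ell}$) substituted into the hypothesis, followed by the same absorption under \eqref{pro2}. Your remarks on finiteness of $\|f\|_{\infty,Q_T}$ and on \eqref{pro2} being exactly the condition that lets the inequality close match the paper's argument.
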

\begin{proof}We easily check
	\begin{eqnarray}
		\|f\|_{\infty,Q_T}&\leq& c\left(\irnt |f|^{q-\ell+\ell}dxdt\right)^{\frac{1+\delta}{q}}\nonumber\\
		&\leq&		c\|f\|_{\infty,Q_T}^{\left(1-\frac{\ell}{q}\right)\left(1+\delta\right)}\|f\|_{\ell,Q_T}^{\frac{\ell}{q}\left(1+\delta\right)}.\label{pro4}
	\end{eqnarray}
	Condition \eqref{pro2} implies
	$$\left(1-\frac{\ell}{q}\right)\left(1+\delta\right)<1.$$
	As a result, we can factor out $\|f\|_{\infty,Q_T}^{\left(1-\frac{\ell}{q}\right)\left(1+\delta\right)}$ from  \eqref{pro4}, thereby obtaining  \eqref{pro1}.
\end{proof}
We obviously have
\begin{equation}
	\frac{\ell(1+\delta)}{\ell(1+\delta)-q\delta}>1+\delta.\nonumber
\end{equation}

Finally, the following two inequalities will be used without acknowledgment:
\begin{eqnarray*}
	(|a|+|b|)^\gamma&\leq&\left\{\begin{array}{ll}
		2^{\gamma-1}(|a|^\gamma+|b|^\gamma)&\mbox{if $\gamma\geq 1$},\\
		|a|^\gamma+|b|^\gamma&\mbox{if $\gamma\leq 1$}.
	\end{array}\right.
\end{eqnarray*}
.% various parameters we introduce. In particular, it is independent of $T$ and $\vo$.

\section{Proof of Theorem \ref{thm}}\label{sec3}
%%Under \eqref{ns31} there exists a local strong solution to \eqref{ns1}-\eqref{ns3}  and 
%We first would like to point out that a strong solution  over a time interval is also smooth there \cite{OP}.
%It is well known that we can obtain local existence of a classical solution to \eqref{ns1}-\eqref{ns3} by suitably regularizing the initial data $u^{(0)}$ and the solution stays classical as long as it is bounded \cite{OP}. 
%Therefore, in our subsequent calculations we may assume that $(u,p)$ is a classical solution. The nature of our argument is to turn a qualitative assumption into a quantitative estimate. To be precise, we show that if
%\begin{equation*}%\label{uub}
%	|u|\in L^\infty(\ott)
%\end{equation*} then \eqref{ns24} must be true. Since the existence of a local-in-time classical solution is known under \eqref{ns31}, the assumption is valid. Our result together with the work of Serrin, Prodi, and Ladyzenskaja asserts that a local-in-time classical solution will never develop singularity.

%The proof of Theorem \ref{thm} is divided into several lemmas and claims. Claims are much more crucial than lemmas. We begin with a lemma, which is well known, and we include it here for completeness.%We begin with a known Sobolev type embedding result.
%the following one.

The proof of \eqref{cb} is the core of our development. It is inspired by an idea from \cite{X1}. We scale the relevant equations by an appropriate $L^q$ norm and then apply an iteration scheme of the De Giorgi type. Various parameters are introduced in the process. Desired estimates can be attained via careful selections of these parameters. That constitutes the novelty of our approach.

%We  begin with 

\begin{proof}[Proof of \eqref{cb} ] 
	Let $w$ be given as in \eqref{wdef}. That is,
	$$w=\sum_{i=1}^{I}c_i.$$
	%	We can easily verify from \eqref{cc} that
	%	\begin{equation}\label{est6}
		%		\sup_{0\leq t\leq T}\irn w dx+\irnt|\nabla\sqrt{w}|^2dxdt\leq c.
		%	\end{equation}
	%Indeed, it is enough for us take note of the equation
	%	\begin{equation}
		%		\nabla\sqrt{w}=\sum_{i=1}^{I}\frac{\sqrt{c_i}}{\sqrt{w}}\nabla\sqrt{c_i}.
		%	\end{equation}
	Set
	\begin{equation}\label{pdef}
		\vpi=\frac{c_i}{\|w\|_{r,Q_T}},%\ \ \psi=\|w\|_{r,Q_T}\phi,
	\end{equation}
	%$$,$$
	where $r\in [1,\infty)$, whose precise value remains to be determined.
Divide through  \eqref{nsf2}  by $\|w\|_{r,Q_T}$ to obtain
	\begin{eqnarray}
		\pt \vpi+\nabla\cdot(\vpi u)&=&\Delta \vpi+\nabla\cdot\left(z_i\vpi\nabla\phi\right)\ \ \mbox{in $ Q_T$},\  i=1,\cdots, I.\label{ns2}%\\
	%	-\Delta\psi&=&\sum_{i=1}^{I}z_i\vpi\equiv W\ \ \mbox{in $ Q_T$},\label{ns3}
	\end{eqnarray}
%	If we compare the situation here with that in \cite{X}, we do not have a factor
%	$\|w\|_{r,Q_T}^{-1}$ in the last term of \eqref{ns2}. This will put more restrictions on our choice of parameters here.
	%	To continue our proof, c
Choose
	\begin{equation}\label{kcon1}
		k\geq2\max_{1\leq i\leq I}\|\vpi(\cdot,0)\|_{\infty,\rn}.
		\end{equation}
	as below.	Define
	\begin{eqnarray}
		k_n&=&k-\frac{k}{2^{n+1}} \ \ \mbox{for $n=0,1,\cdots$.} \label{4kcon3}
	\end{eqnarray}
%We proceed to show that the sequence $\{y_n\}$ satisfies the conditions of Lemma \ref{ynb}.
		%	\begin{clm}	%Let $\vps$ be given as in \eqref{pdef}. 
			Fix
				\begin{equation*}%\label{bdef}
					\beta>1.
				\end{equation*}
	 Then it is easy to check  that the function
$$\left(\frac{1}{ k_{n}^\beta}-\frac{1}{\vpi^\beta}\right)^+$$%\Psi_{n}\equiv
is a legitimate test function  for \eqref{ns2}.  
Upon using it, we obtain
\begin{eqnarray}
	\lefteqn{	\frac{d}{dt}\irn\int_{k_{n}}^{\vpi}\left(\frac{1}{ k_{n}^\beta}-\frac{1}{\mu^\beta}\right)^+d\mu   dx+\beta\irnn\frac{1}{\vpi^{1+\beta}}\left|\nabla \vpi\right|^2 dx}\nonumber\\
	&=&\beta\irnn\frac{1}{\vpi^{\beta}}  (u\cdot\nabla)  \vpi dx-\beta z_i \irnn\frac{1}{\vpi^{\beta}} \nabla\phi\cdot\nabla \vpi dx,
\label{ub1}
\end{eqnarray}
where
\begin{equation}
	\Omega_n(t)=\{x\in\rn: \vpi(x,t)\geq k_n\}.\nonumber
\end{equation}
%We easily see that
Evidently,
\begin{eqnarray*}
	\beta\irnn\frac{1}{\vpi ^{1+\beta}}\left|\nabla \vpi \right|^2dx&=&\frac{4\beta}{(\beta-1)^2}\irn\left|\nabla\left(\frac{1}{ k_{n}^{\frac{\beta-1}{2}}}-\frac{1}{\vpi ^{\frac{\beta-1}{2}}}\right)^+\right|^2dx.%\label{jan2}%,\\
\end{eqnarray*}
	Note from \eqref{nsf4} that
%As before, we have
	\begin{eqnarray}
\beta	\irnn\frac{1}{\vpi ^{\beta}}  (u\cdot\nabla)  \vpi  dx	&=&\frac{\beta}{\beta-1}	\irn (u\cdot\nabla) \left(\frac{1}{k_{n}^{\beta-1}}-\frac{1}{\vpi ^{\beta-1}}\right)^+ dx=0,\label{4ub2}
	\end{eqnarray}
%It follows from \eqref{nsf3}  and 
while \eqref{nsf3} asserts that
\begin{eqnarray}
-\beta \irnn\frac{1}{\vpi ^{\beta}} \nabla\phi\cdot\nabla \vpi dx&=&-\frac{\beta}{\beta-1}	\irn \nabla\phi\cdot\nabla \left(\frac{1}{k_{n}^{\beta-1}}-\frac{1}{\vpi ^{\beta-1}}\right)^+ dx\nonumber\\
&=&-\frac{\beta}{\beta-1}	\irn \Psi \left(\frac{1}{k_{n}^{\beta-1}}-\frac{1}{\vpi ^{\beta-1}}\right)^+ dx\nonumber\\
&=&-\frac{\beta}{\beta-1}	\irn \Psi  \left(\frac{1}{ k_{n}^{\frac{\beta-1}{2}}}-\frac{1}{\vpi ^{\frac{\beta-1}{2}}}\right)^+\left(\frac{1}{ k_{n}^{\frac{\beta-1}{2}}}+\frac{1}{\vpi ^{\frac{\beta-1}{2}}}\right)dx\nonumber\\
&\leq&\frac{2\beta}{(\beta-1) k_{n}^{\frac{\beta-1}{2}}}\irn |\Psi|  \left(\frac{1}{ k_{n}^{\frac{\beta-1}{2}}}-\frac{1}{\vpi ^{\frac{\beta-1}{2}}}\right)^+dx%\nonumber\\
%&\leq&\frac{2\beta}{(\beta-1) k_{n}^{\frac{\beta-1}{2}}}\left(\irnn |\Psi|^{\frac{2N}{den}}\right)  \left(\frac{1}{ k_{n}^{\frac{\beta-1}{2}}}-\frac{1}{\vpi ^{\frac{\beta-1}{2}}}\right)^+dx
.\label{4ub3}
%&=&-\frac{\beta z_i}{\beta-1}\irn \nabla\phi\cdot\nabla
 %\left(\frac{1}{k_{n}^{\beta-1}}-\frac{1}{\vpi ^{\beta-1}}\right)^+ dx\nonumber\\
% &=&-\frac{\beta z_i}{\beta-1}\irn \vpi \left(\frac{1}{k_{n}^{\beta-1}}-\frac{1}{\vpi ^{\beta-1}}\right)^+ dx.
\end{eqnarray}
	We next claim 
	\begin{equation}\label{hope10}
		\int_{k_{n}}^{\vpi }\left(\frac{1}{k_{n}^\beta}-\frac{1}{\mu^\beta}\right)^+d\mu\geq \frac{2\beta}{(1-\beta)^2} \left[\left(\frac{1}{k_{n}^{\frac{\beta-1}{2}}}-\frac{1}{\vpi ^{\frac{\beta-1}{2}}}\right)^+\right]^2.
	\end{equation}
	To see this,  we compute
	\begin{eqnarray}
		\left(\int_{k_{n}}^{\vpi }\left(\frac{1}{k_{n}^\beta}-\frac{1}{\mu^\beta}\right)^+d\mu\right)^{\prime}&=&\left(\frac{1}{k_{n}^\beta}-\frac{1}{\varphi_i^\beta}\right)^+=\frac{1}{k_{n}^\beta}-\frac{1}{\varphi_i^\beta}\ \ \mbox{for $\vpi \geq k_{n}$},\nonumber\\
		\left(\int_{k_{n}}^{\vpi }\left(\frac{1}{k_{n}^\beta}-\frac{1}{\mu^\beta}\right)^+d\mu\right)^{\prime\prime}&=&\beta \vpi ^{-1-\beta}\ \ \mbox{for $\vpi \geq k_{n}$},\nonumber\\
	\left.	\int_{k_{n}}^{\vpi }\left(\frac{1}{k_{n}^\beta}-\frac{1}{\mu^\beta}\right)^+d\mu\right|_{\vpi=k_n}&=&\left.\left(\int_{k_{n}}^{\vpi }\left(\frac{1}{k_{n}^\beta}-\frac{1}{\mu^\beta}\right)^+d\mu\right)^{\prime}\right|_{\vpi=k_n}=0.\label{ic1}
			\end{eqnarray}
			Similarly, 
				\begin{eqnarray}
					\left(\frac{2\beta}{(1-\beta)^2}\left[\left(\frac{1}{k_{n}^{\frac{\beta-1}{2}}}-\frac{1}{\vpi ^{\frac{\beta-1}{2}}}\right)^+\right]^2\right)^{\prime}&=&\frac{2\beta}{\beta-1}\left(\frac{1}{k_{n}^{\frac{\beta-1}{2}}}-\frac{1}{\vpi ^{\frac{\beta-1}{2}}}\right)\vpi ^{-\frac{1+\beta}{2}}\ \ \mbox{for $\vpi \geq k_{n}$},\nonumber\\
		\left(\frac{2\beta}{(1-\beta)^2}\left[\left(\frac{1}{k_{n}^{\frac{\beta-1}{2}}}-\frac{1}{\vpi ^{\frac{\beta-1}{2}}}\right)^+\right]^2\right)^{\prime\prime}&=&\frac{2\beta}{\beta-1}\left(\beta-\frac{1+\beta}{2}\left(\frac{\vpi }{k_{n}}\right)^{\frac{\beta-1}{2}}\right)\vpi ^{-1-\beta}\ \ \mbox{for $\vpi \geq k_{n}$},\nonumber\\
	\left.\left[\left(\frac{1}{k_{n}^{\frac{\beta-1}{2}}}-\frac{1}{\vpi ^{\frac{\beta-1}{2}}}\right)^+\right]^2\right|_{\vpi=k_n}&=&\left.	\left(\left[\left(\frac{1}{k_{n}^{\frac{\beta-1}{2}}}-\frac{1}{\vpi ^{\frac{\beta-1}{2}}}\right)^+\right]^2\right)^{\prime}\right|_{\vpi=k_n}=0.\label{ic2}
	\end{eqnarray}
	We can easily verify  that
	\begin{equation*}
		\left(\int_{k_{n}}^{\vpi }\left(\frac{1}{k_{n}^\beta}-\frac{1}{\mu^\beta}\right)^+d\mu\right)^{\prime\prime}\geq \left(\frac{2\beta}{(1-\beta)^2}\left[\left(\frac{1}{k_{n}^{\frac{\beta-1}{2}}}-\frac{1}{\vpi ^{\frac{\beta-1}{2}}}\right)^+\right]^2\right)^{\prime\prime}\ \ \mbox{for $\vpi \geq k_{n}$.}
	\end{equation*}
	Integrate this inequality  twice and keep in mind \eqref{ic1} and \eqref{ic2}
	% and select the constant of integration appropriately each time 
	to obtain \eqref{hope10}.
	
	Recall \eqref{4kcon3} and \eqref{kcon1} to derive
	\begin{equation}\label{ub3}
		\left.\int_{k_{n}}^{\vpi }\left(\frac{1}{k_{n}^\beta}-\frac{1}{\mu^\beta}\right)^+d\mu\right|_{t=0}=0.
	\end{equation}
%$$$$
Use \eqref{4ub3} and  \eqref{4ub2} in \eqref{ub1},
integrate the resulting inequality with respect to $t$, and keep in mind \eqref{ub3} and \eqref{hope10} to deduce
	\begin{eqnarray}
	\lefteqn{\sup_{0\leq t\leq T}\irn\left[\left(\frac{1}{k_{n}^{\frac{\beta-1}{2}}}-\frac{1}{\vpi ^{\frac{\beta-1}{2}}}\right)^+\right]^2dx}\nonumber\\
&&	+\irnt\left|\nabla\left(\frac{1}{ k_{n}^{\frac{\beta-1}{2}}}-\frac{1}{\vpi ^{\frac{\beta-1}{2}}}\right)^+\right|^2dxdt		\leq \frac{c}{k^{\frac{\beta-1}{2}}}\iqt |\Psi|  \left(\frac{1}{ k_{n}^{\frac{\beta-1}{2}}}-\frac{1}{\vpi ^{\frac{\beta-1}{2}}}\right)^+dxdt.\nonumber
	\end{eqnarray}
Set
	\begin{equation}\label{qn}
		Q_n=\{(x,t)\in Q_T:\vpi(x,t)\geq k_n\}.
	\end{equation}
% This completes the proof.
%\end{proof}
%To continue the proof of Theorem \ref{thm}, we 
Define
\begin{equation}%\label{yn1}
	y_n=|Q_n|.\nonumber
\end{equation}
%$$$$
%Here we have used \eqref{kcon}.\irnt \left[\left(\frac{1}{ k_{n}^{\frac{\beta-1}{2}}}-\frac{1}{\vps^{\frac{\beta-1}{2}}}\right)^+\right]^2dxdt.
%This together with
%
We proceed to show that $\{y_n\}$ satisfies the condition in Lemma \ref{ynb}. 
By calculations similar to those in \eqref{ns4},  we have
%Combing \eqref{ub6} with the Sobolev embedding theorem yields\left|\{\vps\geq k_{n}\}\right|^{1-\frac{2}{q}}
\begin{eqnarray}
	\lefteqn{	\irnt\left[\left(\frac{1}{ k_{n}^{\frac{\beta-1}{2}}}-\frac{1}{\vpi ^{\frac{\beta-1}{2}}}\right)^+\right]^{\frac{4}{N}+2}dxdt}\nonumber\\
	&\leq&\int_{0}^{T}\left(\irn\left[\left(\frac{1}{ k_{n}^{\frac{\beta-1}{2}}}-\frac{1}{\vpi ^{\frac{\beta-1}{2}}}\right)^+\right]^{2}dx \right)^{\frac{2}{N}}\left(\irn\left[\left(\frac{1}{ k_{n}^{\frac{\beta-1}{2}}}-\frac{1}{\vpi ^{\frac{\beta-1}{2}}}\right)^+\right]^{\frac{2N}{N-2}}dx\right)^{\frac{N-2}{N}}dt\nonumber\\
	&\leq& c\left(\sup_{0\leq t\leq T}\irn\left[\left(\frac{1}{ k_{n}^{\frac{\beta-1}{2}}}-\frac{1}{\vpi ^{\frac{\beta-1}{2}}}\right)^+\right]^{2}dx \right)^{\frac{2}{N}}\irnt\left|\nabla\left(\frac{1}{ k_{n}^{\frac{\beta-1}{2}}}-\frac{1}{\vpi ^{\frac{\beta-1}{2}}}\right)^+\right|^2dxdt\nonumber\\
	&\leq& c\left(\frac{c}{k^{\frac{\beta-1}{2}}}\iqt |\Psi|  \left(\frac{1}{ k_{n}^{\frac{\beta-1}{2}}}-\frac{1}{\vpi ^{\frac{\beta-1}{2}}}\right)^+dxdt\right)^{\frac{N+2}{N}}\nonumber\\
&\leq& c\left(\frac{c}{k^{\frac{\beta-1}{2}}}\left(\int_{Q_{n}}|\Psi|^{\frac{2(N+2)}{N+4}}dxdt\right)^{\frac{N+4}{2(N+2)}}\left[\iqt\left[ \left(\frac{1}{ k_{n}^{\frac{\beta-1}{2}}}-\frac{1}{\vpi ^{\frac{\beta-1}{2}}}\right)^+\right]^{\frac{2(N+2)}{N}}dxdt\right]^{\frac{N}{2(N+2)}}\right)^{\frac{N+2}{N}}	,\nonumber
\end{eqnarray}
from whence it follows
\begin{eqnarray}
\lefteqn{\left[\iqt\left[ \left(\frac{1}{ k_{n}^{\frac{\beta-1}{2}}}-\frac{1}{\vpi ^{\frac{\beta-1}{2}}}\right)^+\right]^{\frac{2(N+2)}{N}}dxdt\right]^{\frac{1}{2}}}\nonumber\\
&\leq&\frac{c}{k^{\frac{(\beta-1)(N+2)}{2N}}}\left(\int_{Q_{n}}|\Psi|^{\frac{2(N+2)}{N+4}}dxdt\right)^{\frac{N+4}{2N}}.\label{rub3}
\end{eqnarray}
%Here we have applied \eqref{hop20} in the last step.
It is easy to verify that
\begin{eqnarray*}
	\irnt\left[\left(\frac{1}{ k_{n}^{\frac{\beta-1}{2}}}-\frac{1}{\vpi ^{\frac{\beta-1}{2}}}\right)^+\right]^{\frac{4}{N}+2}dxdt&\geq&
	\int_{Q_{n+1}}\left[\left(\frac{1}{ k_{n}^{\frac{\beta-1}{2}}}-\frac{1}{\vpi ^{\frac{\beta-1}{2}}}\right)^+\right]^{\frac{4}{N}+2}dxdt\nonumber\\
	&\geq&\left(\frac{1}{ k_{n}^{\frac{\beta-1}{2}}}-\frac{1}{k_{n+1}^{\frac{\beta-1}{2}}}\right)^{\frac{4}{N}+2}|Q_{n+1}|
	\nonumber\\
		&=&\left(\frac{\left(1-\frac{1}{2^{n+2}}\right)^{\frac{\beta-1}{2}}-\left(1-\frac{1}{2^{n+1}}\right)^{\frac{\beta-1}{2}}}{ k^{\frac{\beta-1}{2}}\left(1-\frac{1}{2^{n+1}}\right)^{\frac{\beta-1}{2}}\left(1-\frac{1}{2^{n+2}}\right)^{\frac{\beta-1}{2}}}\right)^{\frac{4}{N}+2}|Q_{n+1}|
	\nonumber\\
	&	\geq&\frac{c|Q_{n+1}|}{2^{(\frac{4}{N}+2)n}k^{\frac{(\beta-1)(N+2)}{N}}}.%\label{jan3}
	%\irnt \vpi ^{\frac{2q}{q-1}}\left[\left(1-\frac{k_n}{\vpi }\right)^+\right]^{\frac{2q}{q-1}}dxdt\nonumber\\
	%	&\geq&\left(1-\frac{k_n}{k_{n}}\right)^{\frac{2q}{q-1}}\int_{\{v\geq k_{n}\}}v^{\frac{2q}{q-1}}dxdt\nonumber\\
	%	&\geq&\frac{1}{2^{\frac{2(n+2)q}{q-1}}}\int_{\{v\geq k_{n}\}}v^{\frac{2q}{q-1}}dxdt.\nonumber
\end{eqnarray*}
Combining this with \eqref{rub3} yields
\begin{equation}
	y_{n+1}\leq c4^{n}\left(\int_{Q_{n}}|\Psi|^{\frac{2(N+2)}{N+4}}dxdt\right)^{\frac{N+4}{N}}.\label{rub4}
\end{equation}
	Fix
\begin{eqnarray}
	q>1+\frac{N}{2}.\label{qcon}
\end{eqnarray}
Then we have
\begin{eqnarray*}
	\left(\int_{Q_{n}}|\Psi|^{\frac{2(N+2)}{N+4}}dxdt\right)^{\frac{N+4}{N}}&=&\|\Psi\|_{\frac{2(N+2)}{N+4},Q_n}^{\frac{2(N+2)}{N}}\nonumber\\
	&\leq&|Q_n|^{\frac{2(N+2)}{N}\left[\frac{N+4}{2(N+2)}-\frac{1}{q}\right]}\|\Psi\|_{q,Q_T}^{\frac{2(N+2)}{N}}\nonumber\\
	&\leq&c|Q_n|^{\frac{2(N+2)}{N}\left[\frac{N+4}{2(N+2)}-\frac{1}{q}\right]}\|w\|_{q,Q_T}^{\frac{2(N+2)}{N}}.
\end{eqnarray*}
The last step here is due to \eqref{nsfa}.
Use this in \eqref{rub4} to derive
\begin{eqnarray}
	y_{n+1}%&\leq &c2^{(\frac{4}{N}+2)n}k^{\beta-1+\frac{2(\beta-1)}{N}}\left(\frac{A_{sr}^{-\frac{2}{s}}}{k^{\beta-1+\frac{2}{s}}}\|v\|_{\frac{Nq}{N+q},Q_T}^{2}|Q_{n}|^{1-\frac{2}{q}}\right)^{\frac{N+2}{N}}	\nonumber\\
	&\leq &c 4^{n}\|w\|_{q,Q_T}^{\frac{2(N+2)}{N}}y_n^{1+\alpha},\label{ns11}
\end{eqnarray}
where 
\begin{eqnarray}
%	c&=&c(N,z_1,\cdots, z_I)\ \ \mbox{and}\label{c0}\\
	\alpha&=&\frac{2}{N}\left(2-\frac{N+2}{q}\right) >0\ \ \mbox{due to  \eqref{qcon}}.\label{adef}
\end{eqnarray}
Now we pick a number
\begin{equation}%\label{lr}
\ell>r.\nonumber
\end{equation}
Choose $k$ so large that
%Then impose the condition 
\begin{equation}\label{kcon6}
\max\left\{L_1\left\|\sum_{i=1}^{I}\vpi\right\|_{\ell,Q_T}^{\frac{\ell}{\ell-r}}, L_2\|w\|_{r,Q_T}^{-1}\|w\|_{ q,Q_T}^{\frac{q}{q-\lz}}\right\}	\leq k,
\end{equation}
where $L_1$ and $L_2$ are two positive numbers to be determined.  Note that the exponent of $\|\vpi\|_{\ell,Q_T}$ in the above inequality is the number $1+\delta$ in Lemma \ref{prop} as we can easily see from \eqref{pro4} that
\begin{eqnarray}
\left\|\sum_{i=1}^{I}\vpi\right\|	_{\ell,Q_T}^{\frac{\ell}{\ell-r}}&=&\left\|\frac{w}{\wnr}\right\|	_{\ell,Q_T}^{\frac{\ell}{\ell-r}}\nonumber\\
&\leq&\frac{1}{\wnr^{\frac{\ell}{\ell-r}}}\left[\left\|w\right\|	_{\infty,Q_T}^{\frac{\ell-r}{\ell}}\left\|w\right\|	_{r,Q_T}^{\frac{r}{\ell}}\right]^{\frac{\ell}{\ell-r}}
	=\left\|\sum_{i=1}^{I}\vpi\right\|	_{\infty,Q_T} .\label{jm1}
\end{eqnarray}
Here we have applied \eqref{pdef}. The selection of the exponent of $\|w\|_{ q,Q_T}$ in \eqref{kcon6} is based upon the same idea.
For each 
\begin{equation*}%\label{rcon}
	j>0
\end{equation*}
we have from \eqref{kcon6} that 
\begin{equation*}
	L_1^{j\alpha}\left\|\sum_{i=1}^{I}\vpi\right\|_{\ell,Q_T}^{\frac{j\alpha\ell}{\ell-r}}\leq k^{j\alpha},\ \ L_2^{j\alpha}\|w\|_{r,Q_T}^{-j\alpha}\|w\|_{ q,Q_T}^{\frac{j\alpha q}{q-\lz}}\leq k^{j\alpha}	.
\end{equation*}
Use this in \eqref{ns11} to deduce
\begin{equation*}%\label{jan7}
	y_{n+1}\leq \frac{c 4^n\|w\|_{r,Q_T}^{j\alpha}\|w\|_{ q,Q_T}^{b} k^{2j\alpha}}{(L_1L_2)^{j\alpha}\left\|\sum_{i=1}^{I}\vpi\right\|_{\ell,Q_T}^{\frac{j\alpha\ell}{\ell-r}}} y_n^{1+\alpha},
\end{equation*}
where
\begin{equation}\label{bdef}
	b=\frac{2(N+2)}{N}-\frac{j\alpha q}{q-\lz}.
\end{equation}
The introduction of $j$ here is very crucial. As we shall see, by choosing $j$ suitably large, we can make certain exponents in our nonlinear terms negative. This will enable us to balance out large positive exponents. %If $j=0$, this means that we only use the idea in Proposition \ref{prop} once. 
%In a way, taking $j>0$ implies multiple uses of Lemma \ref{prop}. 

To apply Lemma \ref{ynb}, we first recall \eqref{4kcon3}, \eqref{qn}, and \eqref{pdef} to deduce
%Upon doing so, we conclude from \eqref{ub12} that if 
\begin{eqnarray*}
	y_0&=&
	|Q_0|\leq \irnt\left(\frac{2\vpi}{k}\right)^{r}dxdt\leq\frac{2^r}{k^r}.%\label{jt2}
\end{eqnarray*}
Assume that
\begin{equation}\label{rcon}
	r>2j.
\end{equation}
Subsequently, we can pick $k$ so large that
\begin{eqnarray}
	\frac{2^r}{k^{r-2j}}\leq \frac{(L_1L_2)^{j}\left\|\sum_{i=1}^{I}\vpi\right\|_{\ell,Q_T}^{\frac{j\ell}{\ell-r}}}{ c^{\frac{1}{\alpha}}4^{\frac{1}{\alpha^2}}\|w\|_{r,Q_T}^{j}\|w\|_{ q,Q_T}^{\frac{b}{\alpha}}}.\label{jm7}
\end{eqnarray}
%thereby establishing
%This produces %Then %by 
Lemma \ref{ynb} asserts %, we have
$$\lim_{n\ra \infty}y_n=|\{\vpi\geq k\}|=0.$$ 
That is,
\begin{equation}\label{jm8}
	\sup_{Q_T}\sum_{i=1}^{I}	\vpi\leq I k.
\end{equation}
%We must assume
%\begin{equation}\label{jan6}
%\frac{2(N+2)}{Ns}>\frac{s-2}{s\alpha},\ \ \mbox{or equivalently, $s<2+\frac{2(N+2)\alpha}{N}$}
%\end{equation}
%to ensure \eqref{kcon5}.
According to \eqref{kcon1},  \eqref{kcon6}, and \eqref{jm7}, it is enough for us to take
\begin{eqnarray}
	k&=&2\max_{1\leq i\leq I}\|\vpi(\cdot,0)\|_{\infty,\rn}+L_1 \left\|\sum_{i=1}^{I}\vpi\right\|_{\ell,Q_T}^{\frac{\ell}{\ell-r}}+L_2\|w\|_{r,Q_T}^{-1}\|w\|_{ q,Q_T}^{\frac{q}{q-\lz}}\nonumber\\ &&+ 2^{\frac{r}{r-2j}}c^{\frac{1}{\alpha(r-2j)}}4^{\frac{1}{\alpha^2(r-2j)}}(L_1L_2)^{-\frac{j}{r-2j}}\left\|\sum_{i=1}^{I}\vpi\right\|_{\ell,Q_T}^{-\frac{j\ell}{(r-2j)(\ell-r)}}\|w\|_{r,Q_T}^{\frac{j}{r-2j}}\|w\|_{ q,Q_T}^{\frac{b}{\alpha(r-2j)}}.\nonumber%\label{jt10}
\end{eqnarray}
%In view of \eqref{jm1}, we can take 
%$$ L_1=\frac{1}{2}.$$
Plug this into \eqref{jm8},  take $ L_1=\frac{1}{2I}$ in the resulting inequality,  and make use of \eqref{jm1} %. Then use the resultant $k$ in \eqref{jm8}
 to yield
%As a result, we obtain
\begin{eqnarray*}
\left\|\sum_{i=1}^{I}\vpi\right\|_{\infty,Q_T}&\leq& 4I\max_{1\leq i\leq I}\left\{\|\vpi(\cdot,0)\|_{\infty,\rn}\right\}+2IL_2\|w\|_{r,Q_T}^{-1}\|w\|_{ q,Q_T}^{\frac{q}{q-\lz}}\nonumber\\ &&+ C_1L_2^{-\frac{j}{r-2j}}\left\|\sum_{i=1}^{I}\vpi\right\|_{\ell,Q_T}^{-\frac{j\ell}{(r-2j)(\ell-r)}}\|w\|_{r,Q_T}^{\frac{j}{r-2j}}\|w\|_{ q,Q_T}^{\frac{b}{\alpha(r-2j)}},
\end{eqnarray*}
where 
\begin{equation}%\label{c1}
	C_1=2^{\frac{r}{r-2j}}c^{\frac{1}{\alpha(r-2j)}}4^{\frac{1}{\alpha^2(r-2j)}}(2I)^{\frac{j}{r-2j}+1}.\nonumber
\end{equation}
The constant $c$ here  is understood as in  \eqref{c0}. Hence, % on the right-hand side
it is independent of $j, r$, and $\ell$.
Recall \eqref{pdef} to deduce
\begin{eqnarray}
	\|w\|_{\infty,Q_T}&\leq& 4I\|w(\cdot,0)\|_{\infty,\rn}+2IL_2\|w\|_{ q,Q_T}^{\frac{q}{q-\lz}}\nonumber\\ &&+ C_1L_2^{-\frac{j}{r-2j}}\|w\|_{\ell,Q_T}^{-\frac{j\ell}{(r-2j)(\ell-r)}}\|w\|_{r,Q_T}^{1+\frac{j}{r-2j}+\frac{j\ell}{(r-2j)(\ell-r)}}\|w\|_{ q,Q_T}^{\frac{b}{\alpha(r-2j)} }.\label{est1}
\end{eqnarray}
We further require
\begin{equation}
	q>\lz.\nonumber
\end{equation}
Consequently, we can form the interpolation inequality 
\begin{equation}\label{intq}
	\|w\|_{q,Q_T}\leq \|w\|_{\infty,Q_T}^{\frac{q-\lz}{q}}\|w\|_{\lz,Q_T}^{\frac{\lz}{q}}.
\end{equation}
Choose $L_2$  so that
\begin{equation*}
	2IL_2\|w\|_{\lz,Q_T}^{\frac{\lz}{q-\lz}}= \frac{1}{2}.
\end{equation*}
As a result, the third term in \eqref{est1} can be estimated as follows:
\begin{equation}
	2IL_2\|w\|_{ q,Q_T}^{\frac{q}{q-\lz}}\leq 2IL_2\|w\|_{\infty,Q_T}\|w\|_{\lz,Q_T}^{\frac{\lz}{q-\lz}}= \frac{1}{2}\|w\|_{\infty,Q_T}.\nonumber
\end{equation}
With these in mind, we deduce from \eqref{est1} that
\begin{eqnarray}
	\|w\|_{\infty,Q_T}&\leq& 8I\|w(\cdot,0)\|_{\infty,\rn}\nonumber\\
	&&+ C\|w\|_{\lz,Q_T}^{s_1}\|w\|_{\ell,Q_T}^{-\frac{j\ell}{(r-2j)(\ell-r)}}\|w\|_{r,Q_T}^{1+\frac{j}{r-2j}+\frac{j\ell}{(r-2j)(\ell-r)}}\|w\|_{q,Q_T}^{\frac{b}{\alpha(r-2j)}}.\label{est2}
\end{eqnarray}
where
\begin{eqnarray}
	C&=&C_1(4I)^{\frac{j}{r-2j}}=2^{\frac{r}{r-2j}}c^{\frac{1}{\alpha(r-2j)}}4^{\frac{1}{\alpha^2(r-2j)}}(2I)^{\frac{j}{r-2j}+1}(4I)^{\frac{j}{r-2j}},\label{cc}\\
	s_1&=&\frac{\lz j}{(q-\lz)(r-2j)}.\label{s11}
\end{eqnarray}
We proceed to show that we can extract enough information from this inequality by making suitable choice of the parameters. The idea is to transform the last three different norms in  \eqref{est2} into a single one.

\begin{clm}Let $q$ be given as before, i.e.,
	\begin{equation}
		q>\max\left\{\frac{N+2}{2}, \lz\right\}.\nonumber
	\end{equation} 
	Define
	\begin{equation}
		\mq=\frac{2(N+2)(q-\lz)}{N\alpha q}.\label{mqd}
	\end{equation}
	 Then for each $r>\mq$ there holds
	%there is a constant $c$ such that
	\begin{equation}\label{wif}
		\|w\|_{\infty,Q_T}\leq 8I\|w(\cdot,0)\|_{\infty,\rn}+C_2\|w\|_{\lz,Q_T}^{s_2}\|w\|_{r,Q_T}^{\frac{r}{r-M_q}},
	\end{equation}
	where 
	\begin{eqnarray}
		s_2&=&s_1|_{j=\frac{\mq}{2}}\frac{\lz \mq}{(q-\lz)(r-\mq)},\nonumber\\
		C_2&=&C|_{j=\frac{\mq}{2}}
		=2^{\frac{r}{r-\mq}}c^{\frac{1}{\alpha(r-\mq)}}4^{\frac{1}{\alpha^2(r-\mq)}}(2I)^{\frac{\mq}{2(r-\mq)}+1}(4I)^{\frac{\mq}{2(r-\mq)}}.\nonumber
	\end{eqnarray}
\end{clm}
%The constant  $c$ here is the same as the one in \eqref{c0}.  It depends on $N, z_i, q$ only.
\begin{proof} According to \eqref{cc} and \eqref{s11},  $C$ and $s_1$ in \eqref{est2} do not depend on $\ell$. With this in mind, we take $\ell\ra \infty$ there  to derive
	\begin{eqnarray}
		\|w\|_{\infty,Q_T}&\leq& 8I\|w(\cdot,0)\|_{\infty,\rn}+ C\|w\|_{\lz,Q_T}^{s_1}\|w\|_{\infty,Q_T}^{-\frac{j}{(r-2j)}}\|w\|_{r,Q_T}^{\frac{r}{r-2j}}\|w\|_{q,Q_T}^{\frac{b}{\alpha(r-2j)}}.\label{e2}
	\end{eqnarray}
We still have \eqref{intq}.	Raise both sides of the inequality to the power of $\frac{jq}{(r-2j)(q-\lz)}$ to get
	\begin{equation}
		\|w\|_{q,Q_T}^{\frac{jq}{(r-2j)(q-\lz)}}\leq \|w\|_{\infty,Q_T}^{\frac{j}{(r-2j)}}\|w\|_{\lz,Q_T}^{\frac{j\lz}{(r-2j)(q-\lz)}}.\nonumber
	\end{equation}
	Incorporating this into \eqref{e2} yields
	\begin{eqnarray}
		\|w\|_{\infty,Q_T}&\leq& 8I\|w(\cdot,0)\|_{\infty,\rn}+C\|w\|_{\lz,Q_T}^{s_1}\|w\|_{r,Q_T}^{\frac{r}{r-2j}}\|w\|_{q,Q_T}^{\frac{b}{\alpha(r-2j)}-\frac{jq}{(r-2j)(q-\lz)}}.\label{e3}
	\end{eqnarray}
	We choose our parameters in such a way that the last exponent is $0$. That is,
	\begin{equation}
		\frac{b}{\alpha(r-2j)}-\frac{jq}{(r-2j)(q-\lz)}=0.\nonumber
	\end{equation}
	Plug \eqref{bdef} into this to get
	%\frac{2(N+2)}{N}-\frac{j\alpha q}{q-\lz}
		\begin{equation}
\frac{(N+2)}{N\alpha}	-\frac{jq}{(q-\lz)}=0,\nonumber
	\end{equation}
from whence it follows
\begin{equation}
	j=\frac{(N+2)(q-\lz)}{N\alpha q}=\frac{\mq}{2}.\nonumber%=\frac{(N+2)(q-\lz)}{2(2q-N-2)}.
\end{equation}	
%The last step here is due to \eqref{adef}. Under this choice of $j$
Substitute this into \eqref{e3} to  arrive at \eqref{wif}. The proof is complete.
	\end{proof}
 An easy consequence of the preceding claim is that
	\begin{equation}
w\in L^{\infty}(Q_T)\ \ \mbox{whenever}	\ \ \lz>\frac{N+2}{2}\ \ \mbox{and}\ \ w\in L^{\lz}(Q_T) .\nonumber
	\end{equation}
	Indeed, we easily see from \eqref{adef} that
	\begin{equation}\label{mql}
		\lim_{q\ra\infty}\mq=\frac{N+2}{2}.
	\end{equation} 
	Under our assumption there exists $q>\lz$ such that
	\begin{equation}
		M_q<\lz.\nonumber
	\end{equation}
	That is, we can take $r=\lz$ in \eqref{wif}.
%\end{proof}

From here on we assume
\begin{equation}\label{lzc1}
%\frac{N+2}{4}<
	\lz<\frac{N+2}{2}.
\end{equation}
Combining this with \eqref{adef} and \eqref{mqd} yields
\begin{equation}
	\mq=\frac{(N+2)(q-\lz)}{2\left(q-\frac{N+2}{2}\right)}>\frac{N+2}{2}>\lz.\label{ml}
\end{equation}
Moreover, we can represent $b$ as
\begin{equation}\label{bd1}
	b=\frac{\alpha q(\mq-j)}{q-\lz}.
\end{equation}

To continue the proof of \eqref{cb}, we consider the function
\begin{equation}
	f(r)=\ln\left(\iqt w^rdxdt\right)\ \ \mbox{for $r>\lz$}.\nonumber
\end{equation}
We can easily verify that $	f(r)$ is a convex function on $(\lz, \infty)$. Indeed, let $\lz<r_1<r_2$ and $\lambda\in [0,1]$. The interpolation inequality asserts
\begin{equation}
	\|w\|_{\lambda r_1+(1-\lambda)r_2, Q_T}\leq 	\|w\|_{ r_1, Q_T}^{\frac{\lambda r_1}{\lambda r_1+(1-\lambda)r_2}}	\|w\|_{r_2, Q_T}^{\frac{(1-\lambda)r_2}{\lambda r_1+(1-\lambda)r_2}}.\nonumber
\end{equation}
Raise both sides to the power of $\lambda r_1+(1-\lambda)r_2$ and then take logarithm  to derive
\begin{equation}
	f(\lambda r_1+(1-\lambda)r_2)\leq \lambda f(r_1)+(1-\lambda)f(r_2).\nonumber
\end{equation}
We can also obtain the convexity of $f$ by computing
%Remember that $w$ is a very ``nice'' function. We easily see that\label{fp}
\begin{eqnarray}
	f^\prime(r)&=&\frac{\iqt w^r\ln wdxdt}{\iqt w^rdxdt},\nonumber\\
	f^{\prime\prime}(r)&=&\frac{\iqt w^r\ln^2 wdxdt\iqt w^rdxdt-\left(\iqt w^r\ln wdxdt\right)^2}{\left(\iqt w^rdxdt\right)^2}.\nonumber
\end{eqnarray}
Apply H\"{o}lder inequality in the expression for $f^{\prime\prime}(r)$ to derive
\begin{equation}\label{pp}
	f^{\prime\prime}(r)\geq 0.
\end{equation}
%We also need the following limit:

Next, we evaluate
\begin{eqnarray}
	\lim_{\ell\ra r}\left(\frac{\|w\|_{\ell,Q_T}}{\|w\|_{r,Q_T}}\right)^{\frac{1}{\ell-r}}&=&	\lim_{\ell\ra r}e^{\frac{\ln \|w\|_{\ell,Q_T}-\ln \|w\|_{r,Q_T}}{\ell-r}}\nonumber\\
	&=&e^{\left(\frac{f(r)}{r}\right)^\prime}\nonumber\\
	&=&\|w\|_{r,Q_T}^{-\frac{1}{r}}e^{\frac{\iqt w^r\ln wdxdt}{r\iqt w^rdxdt}}.\nonumber
\end{eqnarray}
%Remember  that the constant $c$ in \eqref{est2} does not depend $\ell$.  With this in mind, we substitute \eqref{b1} into \eqref{est2} and t
Plug \eqref{bd1} into \eqref{est2} and then take $\ell\ra r$ there to obtain
% we proceed from \eqref{est2}. 
 \begin{eqnarray}
 	\|w\|_{\infty,Q_T}&\leq& 8I\|w(\cdot,0)\|_{\infty,\rn}\nonumber\\
 	&&+ C\|w\|_{\lz,Q_T}^{s_1}\left[\lim_{\ell\ra r}\left(\frac{\|w\|_{\ell,Q_T}}{\|w\|_{r,Q_T}}\right)^{\frac{\ell}{\ell-r}}\right]^{-\frac{j}{(r-2j)}}\|w\|_{r,Q_T}^{1+\frac{j}{r-2j}}\|w\|_{q,Q_T}^{\frac{q(\mq-j)}{(q-\lz)(r-2j)}}\nonumber\\
 	&\leq& 8I\|w(\cdot,0)\|_{\infty,\rn}+ C\|w\|_{\lz,Q_T}^{s_1}\left[\|w\|_{r,Q_T}^{-1}e^{\fp}\right]^{-\frac{j}{(r-2j)}}\|w\|_{r,Q_T}^{1+\frac{j}{r-2j}}\|w\|_{q,Q_T}^{\frac{q(\mq-j)}{(q-\lz)(r-2j)}}\nonumber\\
 	&=& 8I\|w(\cdot,0)\|_{\infty,\rn}+ C\|w\|_{\lz,Q_T}^{s_1}e^{-\frac{j\fp}{(r-2j)}+\frac{\fr}{r-2j}}\|w\|_{q,Q_T}^{\frac{q(\mq-j)}{(q-\lz)(r-2j)}}	.\label{est10}
 \end{eqnarray}
 Note that $C$ and $s_1$ remain the same because they do not depend on $\ell$.
 
The rest of the proof is similar to that in \cite{X1}. For the reader's convenience, we reproduce it here.
%	To continue, we refine our selection of parameters further.
In view of \eqref{mql}, we may pick
\begin{equation}\label{qmq}
	q>2\mq.
\end{equation}
Subsequently, take
\begin{equation}\label{jl}
	j>\frac{q}{2}.
\end{equation}
Introduce a new parameter
\begin{equation}
	\ve>0.\nonumber
\end{equation}
Then
select
\begin{equation}
	r>2j+\ve.\nonumber
\end{equation}
This choice of $r$ satisfies \eqref{rcon}. 
The introduction of $\ve$ is to ensure that $r$ stays away from $2j$ because the constant $C$ and $s_1$ in \eqref{est10} blow up as $r\ra 2j$.
In summary, we have
\begin{equation}\label{rq}
	2\mq<	q<2j<2j+\ve<r.
\end{equation}
We can form the interpolation inequality
\begin{equation}\label{int}
	\|w\|_{2j+\ve,Q_T}\leq \|w\|_{r,Q_T}^{\frac{r(2j+\ve-q)}{(2j+\ve)(r-q)}}\|w\|_{q,Q_T}^{\frac{q(r-2j-\ve)}{(2j+\ve)(r-q)}}
\end{equation}
Note from  \eqref{rq} that
\begin{equation}
	j>\mq.\nonumber
\end{equation}
We may raise both sides of \eqref{int} to the power of $\frac{(j-\mq)(2j+\ve)(r-q)}{(q-\lz)(r-2j)(r-2j-\ve)}$, thereby obtaining
\begin{equation}
	\|w\|_{2j+\ve,Q_T}^{\frac{(j-\mq)(2j+\ve)(r-q)}{(q-\lz)(r-2j)(r-2j-\ve)}}\leq \|w\|_{r,Q_T}^{\frac{r(2j+\ve-q)(j-\mq)}{(q-\lz)(r-2j)(r-2j-\ve)}}\|w\|_{q,Q_T}^{\frac{q(j-\mq)}{(q-\lz)(r-2j)}}.\nonumber
\end{equation}
Incorporating this into \eqref{est10}, we arrive at
\begin{eqnarray}
	\|w\|_{\infty,Q_T}&\leq& 8I\|w(\cdot,0)\|_{\infty,\rn}\nonumber\\
	&&+ C\|w\|_{\lz,Q_T}^{s_1}e^{\frac{-j\fp+\fr}{r-2j}+\frac{(2j+\ve-q)(j-\mq)\fr}{(q-\lz)(r-2j)(r-2j-\ve)}}\|w\|_{2j+\ve,Q_T}^{-\frac{(j-\mq)(2j+\ve)(r-q)}{(q-\lz)(r-2j)(r-2j-\ve)}}.\label{ja2}
\end{eqnarray}
Fix
\begin{equation}
	\eta\in (0,1).\nonumber
\end{equation}
Without any loss of generality, we may assume
\begin{eqnarray}
	\lefteqn{-jf^\prime(s)+f(s)+\frac{(2j+\ve-q)(j-\mq)f(s)}{(q-\lz)(s-2j-\ve)}}\nonumber\\ &\geq&\left[\frac{(j-\mq)(s-q)}{(q-\lz)(s-2j-\ve)}+\frac{(s-2j)(1-\eta)}{2j+\ve-\lz}\right]f(2j+\ve) \ \ \mbox{for each $s\in (2j+\ve, r]$.}\label{pp1}
\end{eqnarray}%j+\mq+\ve
Indeed, suppose this is not true. That is, there is a $s\in (2j+\ve, r]$ such that
\begin{eqnarray}
	\lefteqn{-jf^\prime(s)+f(s)+\frac{(2j+\ve-q)(j-\mq)f(s)}{(q-\lz)(s-2j-\ve)}}\nonumber\\ &<&\left[\frac{(j-\mq)(s-q)}{(q-\lz)(s-2j-\ve)}+\frac{(s-2j)(1-\eta)}{2j+\ve-\lz}\right]f(2j+\ve).\nonumber
\end{eqnarray}%j+\mq+\ve
Obviously, \eqref{ja2} holds for $r=s$. As a result, we can apply the preceding inequality to it. Upon doing so, we arrive at
\begin{eqnarray}
	\|w\|_{\infty,Q_T}&\leq& 8I\|w(\cdot,0)\|_{\infty,\rn}\nonumber\\
	&&+ C\|w\|_{\lz,Q_T}^{s_1}\|w\|_{2j+\ve,Q_T}^{-\frac{(j-\mq)(2j+\ve)(s-q)}{(q-\lz)(s-2j)(s-2j-\ve)}+\left[\frac{(j-\mq)(s-q)}{(q-\lz)(s-2j-\ve)}+\frac{(s-2j)(1-\eta)}{2j+\ve-\lz}\right]\frac{2j+\ve}{s-2j}}\nonumber\\
	&=& 8I\|w(\cdot,0)\|_{\infty,\rn}+ C\|w\|_{\lz,Q_T}^{s_1}\|w\|_{2j+\ve,Q_T}^{\frac{(2j+\ve)(1-\eta)}{2j+\ve-\lz}}.\label{est11}
\end{eqnarray}
As we noted earlier, $C$ and $s_1$ here remain bounded for $s\in (2j+\ve, r]$.
In view of \eqref{rq}, \eqref{qcon} and \eqref{lzc1}, we can form the interpolation inequality
\begin{equation}%\label{int3}
	\|w\|_{2j+\ve,Q_T}\leq \|w\|_{\infty,Q_T}^{\frac{2j+\ve-\lz}{2j+\ve}}\|w\|_{\lz,Q_T}^{\frac{\lz}{2j+\ve}}.\nonumber
\end{equation}
Collect this  in \eqref{est11} to get
\begin{eqnarray}
	\|w\|_{\infty,Q_T}&\leq& 8I\|w(\cdot,0)\|_{\infty,\rn}+C\|w\|_{\lz,Q_T}^{s_1+\frac{\lz(1-\eta)}{2j+\ve-\lz}}\|w\|_{\infty,Q_T}^{1-\eta}.\nonumber
\end{eqnarray}
%If $\theta\leq 0$, we can argue as before to conclude \eqref{es}.
%If $	\theta>0$,
Then \eqref{cb} follows from a suitable application of Young's inequality (\cite{GT}, p. 145). That is to say, if \eqref{pp1} fails to be true, then \eqref{cb} holds.

We next show that \eqref{cb} remains valid under \eqref{pp1}. On account of \eqref{pp},  the convexity of $f(s)$, there holds
\begin{equation}\label{ja4}
	f^\prime(s)\geq	\frac{f(s)-f(2j+\ve)}{s-2j-\ve}\ \ \mbox{for each $s>2j+\ve$}.
\end{equation}
We may decompose
\begin{equation}
	\frac{(j-\mq)(s-q)}{(q-\lz)(s-2j-\ve)}=\frac{(j-\mq)}{(q-\lz)}+\frac{(j-\mq)(2j+\ve-q)}{(q-\lz)(s-2j-\ve)}.\nonumber
\end{equation}
Incorporate the preceding two results into \eqref{pp1} to derive
\begin{eqnarray}
	%	\lefteqn{-jf^\prime(s)+f(s)+\frac{(2j+\ve-q)(j-\mq)f^\prime(s)}{(q-\lz)}}\nonumber\\
	-s_jf^\prime(s)+f(s) &\geq&\left[\frac{(j-\mq)}{(q-\lz)}+\frac{(s-2j)(1-\eta)}{2j+\ve-\lz}\right]f(2j+\ve) \ \ \mbox{for each $s\in (2j+\ve, r]$.}\label{ja3}
\end{eqnarray}
where
\begin{equation}\label{sj}
	s_j= j-	\frac{(2j+\ve-q)(j-\mq)}{(q-\lz)}.
\end{equation}
We further require 
\begin{equation}\label{ja8}
	s_j>0.
\end{equation}
For the  inequality to hold, it is enough for us to take
\begin{equation}\label{eu}
	\ve<\frac{q-\lz}{j-\mq}\left[j-\frac{(2j-q)(j-\mq)}{q-\lz}\right].
\end{equation}
This is possible only when
\begin{equation}\label{ja7}
	\frac{(2j-q)(j-\mq)}{q-\lz}-	j=\frac{2j^2-(2q+2\mq-\lz)j+\mq q}{q-\lz}<0.
\end{equation}
The numerator is a quadratic function in $j$. The discriminant is given by
\begin{eqnarray}
	(2q+2\mq-\lz)^2-8\mq q&=&4q^2+4\mq^2-4\lz(q+\mq)+\lz^2\nonumber\\
	&=&(2q-\lz)^2+4\mq(\mq-\lz)>0.\nonumber
\end{eqnarray} 
The last step here is due to \eqref{ml}. According to the quadratic formula, the solution set to \eqref{ja7} is given by
\begin{equation}
	j\in (j_1, j_2),\nonumber
\end{equation}
where
\begin{eqnarray}
	j_1&=&\frac{2q+2\mq-\lz-\sqrt{(2q+2\mq-\lz)^2-8\mq q}}{4},\nonumber\\
	j_1&=&\frac{2q+2\mq-\lz+\sqrt{(2q+2\mq-\lz)^2-8\mq q}}{4}.\nonumber
\end{eqnarray}
Obviously, both $j_1$ and $j_2$ are positive. In particular,
\begin{equation}\label{j2}
	j_2\in (q, q+\mq-\lz).
\end{equation}
Under \eqref{ja8}, we can write \eqref{ja3} in the form
\begin{eqnarray}
	f^\prime(s)-\frac{f(s)}{s_j} &\leq&-\left[\frac{(j-\mq)}{(q-\lz)}+\frac{(s-2j)(1-\eta)}{2j+\ve-\lz}\right]\frac{f(2j+\ve)}{s_j} \ \ \mbox{for each $s\in (2j+\ve, r]$.}\nonumber%\label{las1}
\end{eqnarray}
Multiply through the inequality by $e^{-\frac{s}{s_j}}$ to derive 
\begin{equation}
	\left(e^{-\frac{s}{s_j}}f(s)\right)^\prime\leq -\left[\frac{(j-\mq)}{(q-\lz)}+\frac{(s-2j)(1-\eta)}{2j+\ve-\lz}\right]\frac{f(2j+\ve)e^{-\frac{s}{s_j}}}{s_j} \ \ \mbox{for each $s\in (2j+\ve, r]$.}\nonumber
\end{equation}
Subsequently,
\begin{eqnarray}
	e^{-\frac{r}{s_j}}	f(r)&=&e^{-\frac{2j+\ve}{s_j}}f(2j+\ve)+\int_{2j+\ve}^{r}	\left(e^{-\frac{s}{s_j}}f(s)\right)^\prime ds\nonumber\\
	&\leq&e^{-\frac{2j+\ve}{s_j}}f(2j+\ve)+\frac{(j-\mq)\left(e^{-\frac{r}{s_j}}-e^{-\frac{2j+\ve}{s_j}}\right)f(2j+\ve)}{(q-\lz)}\nonumber\\
	&&+\frac{(1-\eta)f(2j+\ve)}{2j+\ve-\lz}\left[(r-2j)e^{-\frac{r}{s_j}}-\ve e^{-\frac{2j+\ve}{s_j}}+s_j\left(e^{-\frac{r}{s_j}}-e^{-\frac{2j+\ve}{s_j}}\right)\right]\nonumber\\
	&=&\left[1-\frac{j-\mq}{q-\lz}-\frac{(1-\eta)(\ve+s_j)}{2j+\ve-\lz}\right]e^{-\frac{2j+\ve}{s_j}}f(2j+\ve)\nonumber\\
	&&+\left[\frac{j-\mq}{q-\lz}+\frac{(1-\eta)(r-2j+s_j)}{2j+\ve-\lz}\right]e^{-\frac{r}{s_j}}f(2j+\ve).\label{h1}
\end{eqnarray}
Recall \eqref{ja4} and \eqref{sj} to obtain
\begin{eqnarray}
	\lefteqn{-j\fp+\fr+\frac{(2j+\ve-q)(j-\mq)\fr}{(q-\lz)(r-2j-\ve)}}\nonumber\\
	&\leq&-\frac{j(f(r)-f(2j+\ve))}{r-2j-\ve}+\fr+\frac{(2j+\ve-q)(j-\mq)f(r)}{(q-\lz)(r-2j-\ve)}\nonumber\\
	&=&\frac{(r-2j-\ve-s_j)\fr}{r-2j-\ve}+\frac{jf(2j+\ve)}{r-2j-\ve}.\label{h2}
\end{eqnarray}
%Moreover, \eqref{rcon} combined with \eqref{rq} implies
We further require
\begin{equation}\label{rl}
	r>2j+\ve+s_j.
\end{equation}
Under this choice for $r$, we can combine \eqref{h1} and \eqref{h2} to deduce
\begin{eqnarray}
	\lefteqn{-j\fp+\fr+\frac{(2j+\ve-q)(j-\mq)\fr}{(q-\lz)(r-2j-\ve)}}\nonumber\\
	&\leq&\frac{(r-2j-\ve-s_j)}{r-2j-\ve}\left[1-\frac{j-\mq}{q-\lz}-\frac{(1-\eta)(\ve+s_j)}{2j+\ve-\lz}\right]e^{\frac{r-2j-\ve}{s_j}}f(2j+\ve)\nonumber\\
	&&+\frac{(r-2j-\ve-s_j)}{r-2j-\ve}\left[\frac{j-\mq}{q-\lz}+\frac{(1-\eta)(r-2j+s_j)}{2j+\ve-\lz}\right]f(2j+\ve)+\frac{jf(2j+\ve)}{r-2j-\ve}.\nonumber
\end{eqnarray}
Utilizing this in \eqref{ja2}, we arrive at
%\begin{eqnarray}
%	\|w\|_{\infty,Q_T}&\leq& 8I\|w(\cdot,0)\|_{\infty,\rn}\nonumber\\
%	&&+ c\|w\|_{\lz,Q_T}^{s_1}e^{\frac{-j\fp+\fr}{r-2j}+\frac{(2j+\ve-q)(j-\mq)\fr}{(q-\lz)(r-2j)(r-2j-\ve)}}\|w\|_{2j+\ve,Q_T}^{-\frac{(j-\mq)(2j+\ve)(r-q)}{(q-\lz)(r-2j)(r-2j-\ve)}}.
%\end{eqnarray}
\begin{eqnarray}
	\|w\|_{\infty,Q_T}&\leq&	8I\|w(\cdot,0)\|_{\infty,\rn}+ C\|w\|_{\lz,Q_T}^{s_1}\|w\|_{2j+\ve,Q_T}^{\beta_2},\label{est12}
\end{eqnarray}
where
\begin{eqnarray}
	%	s_3&=&s_1+\frac{(r-q-j)s_0}{(r-q)(r-2j)}\int_{q}^{2j}\left[\ln\left(\frac{s-\mq}{q-\mq}\right) +1\right]ds\nonumber\\
	%	&&+\frac{(r-q-j)s_0}{(r-q)(r-2j)}\left(e^{\frac{r}{j}-2}-1\right)(2j-\mq)\ln\left(\frac{2j-\mq}{q-\mq}\right),\nonumber\\
	\beta_2&=&-\frac{(j-\mq)(2j+\ve)(r-q)}{(q-\lz)(r-2j)(r-2j-\ve)}+\frac{j(2j+\ve)}{(r-2j-\ve)(r-2j)}\nonumber\\
	&&+\frac{(r-2j-\ve-s_j)e^{\frac{r-2j-\ve}{s_j}}(2j+\ve)}{(r-2j-\ve)(r-2j)}\left[1-\frac{j-\mq}{q-\lz}-\frac{(1-\eta)(\ve+s_j)}{2j+\ve-\lz}\right]\nonumber\\
	&&+\frac{(r-2j-\ve-s_j)(2j+\ve)}{(r-2j-\ve)(r-2j)}\left[\frac{j-\mq}{q-\lz}+\frac{(1-\eta)(r-2j+s_j)}{2j+\ve-\lz}\right].\label{b2}
\end{eqnarray}
We must have
\begin{equation}\label{dh6}
	\beta_2<\frac{2j+\ve}{2j+\ve-\lz}.
\end{equation}
Plug \eqref{b2} into this and simplify the resulting inequality to obtain
\begin{eqnarray}
	\lefteqn{-\frac{(j-\mq)(r-\lz)}{(q-\lz)}+r-\mq}\nonumber\\
	&&+(r-2j-\ve-s_j)e^{\frac{r-2j-\ve}{s_j}}\left[1-\frac{j-\mq}{q-\lz}-\frac{(1-\eta)(\ve+s_j)}{2j+\ve-\lz}\right]\nonumber\\
	&&+(r-2j-\ve-s_j)\left[\frac{j-\mq}{q-\lz}+\frac{(1-\eta)(r-2j+s_j)}{2j+\ve-\lz}\right]<\frac{(r-2j)(r-\lz)}{2j+\ve-\lz},\nonumber%\label{dh2}
\end{eqnarray}
from whence it follows
\begin{eqnarray}
	\lefteqn{\left[(\ve+s_j)e^{\frac{r-2j-\ve}{s_j}}-(r-2j+s_j)\right]\frac{(r-2j-\ve-s_j)\eta}{2j+\ve-\lz}}	\nonumber\\
	&<&\frac{(r-2j)(r-\lz)}{2j+\ve-\lz}+\frac{(j-\mq)(r-\lz)}{(q-\lz)}-r+\mq\nonumber\\
	&&+(r-2j-\ve-s_j)\left[e^{\frac{r-2j-\ve}{s_j}}\left[\frac{j-\mq}{q-\lz}+\frac{(\ve+s_j)}{2j+\ve-\lz}-1\right]-\frac{j-\mq}{q-\lz}-\frac{(r-2j+s_j)}{2j+\ve-\lz}\right]\nonumber\\
	&\equiv& h(r).\label{dh3}
\end{eqnarray}
We easily ckeck that
\begin{equation}
	\frac{(\ve+s_j)e^{\frac{r-2j-\ve}{s_j}}}{2j+\ve-\lz}-\frac{(r-2j+s_j)}{2j+\ve-\lz}>0\ \ \mbox{for $r>2j+\ve$.}\nonumber
\end{equation}
Indeed, there hold
\begin{eqnarray}
	\left.(\ve+s_j)e^{\frac{r-2j-\ve}{s_j}}-(r-2j+s_j)	\right|_{r=2j+\ve}&=&0,\nonumber\\
	\left[(\ve+s_j)e^{\frac{r-2j-\ve}{s_j}}-(r-2j+s_j)\right]^\prime&>&0\ \ \mbox{for $r>2j+\ve$.}\nonumber
\end{eqnarray}
Hence, we must show that there exists a $r>2j+\ve+s_j$ such that
\begin{eqnarray}
	%	\lefteqn{\frac{(r-2j)(r-\lz)}{2j+\ve-\lz}+\frac{(j-\mq)(r-\lz)}{(q-\lz)}-r+\mq}\nonumber\\
	%	&&+(r-2j-\ve-s_j)\left[e^{\frac{r-2j-\ve}{s_j}}\left[\frac{j-\mq}{q-\lz}+\frac{(\ve+s_j)}{2j+\ve-\lz}-1\right]-\frac{j-\mq}{q-\lz}-\frac{(r-2j+s_j)}{2j+\ve-\lz}\right]>0.
	h(r)>0 .\label{dh4}
\end{eqnarray}
Note from \eqref{sj} that
\begin{eqnarray}
	\frac{r-2j+s_j}{2j+\ve-\lz}&=&\frac{r-j-\frac{(2j+\ve-q)(j-\mq)}{q-\lz}}{2j+\ve-\lz}\nonumber\\
	&=&\frac{r-\mq-\frac{(2j+\ve-\lz)(j-\mq)}{q-\lz}}{2j+\ve-\lz}\nonumber\\
	&=&\frac{r-\mq}{2j+\ve-\lz}-\frac{(j-\mq)}{q-\lz}.\nonumber
\end{eqnarray}
By the same token,
\begin{eqnarray}
	\frac{2j-s_j-\mq}{2j+\ve-\lz}&=&\frac{j+\frac{(2j+\ve-q)(j-\mq)}{q-\lz}-\mq}{2j+\ve-\lz}\nonumber\\
	&=&\frac{(j-\mq)}{q-\lz},\nonumber\\
	\frac{s_j+\ve}{2j+\ve-\lz}&=&\frac{j-\frac{(2j+\ve-q)(j-\mq)}{q-\lz}+\ve}{2j+\ve-\lz}\nonumber\\
	&=&\frac{2j-\mq-\frac{(2j+\ve-\lz)(j-\mq)}{q-\lz}+\ve}{2j+\ve-\lz}\nonumber\\
	&=&\frac{2j-\mq+\ve}{2j+\ve-\lz}-\frac{j-\mq}{q-\lz}.\nonumber
\end{eqnarray}
Incorporate the preceding three equations into the expression for $h(r)$ in \eqref{dh3} to derive
\begin{eqnarray}
	h(r)&=&\frac{(r-2j)(r-\lz)}{2j+\ve-\lz}+\frac{(j-\mq)(r-\lz)}{(q-\lz)}-r+\mq\nonumber\\
	&&+(r-2j-\ve-s_j)\left[e^{\frac{r-2j-\ve}{s_j}}\left[\frac{2j-\mq+\ve}{2j+\ve-\lz}-1\right]-\frac{r-\mq}{2j+\ve-\lz}\right]\nonumber\\
	&=&\frac{(r-2j)(r-\lz)}{2j+\ve-\lz}+\frac{(j-\mq)(r-\lz)}{(q-\lz)}-\frac{(r-\mq)(r-s_j-\lz)}{2j+\ve-\lz}\nonumber\\
	&&-\frac{(r-2j-\ve-s_j)(\mq-\lz)e^{\frac{r-2j-\ve}{s_j}}}{2j+\ve-\lz}\nonumber\\%+(r-2j-\ve-s_j)\left[e^{\frac{r-2j-\ve}{s_j}}\left[\frac{2j-\mq+\ve}{2j+\ve-\lz}-1\right]-\frac{r-\mq}{2j+\ve-\lz}\right]\nonumber\\
	&=&\frac{-(2j-s_j-\mq)r+\lz(2j-\mq)}{2j+\ve-\lz}+\frac{(j-\mq)(r-\lz)}{(q-\lz)}\nonumber\\
	&&-\frac{(r-2j-\ve-s_j)(\mq-\lz)e^{\frac{r-2j-\ve}{s_j}}}{2j+\ve-\lz}\nonumber\\
	&=&\frac{\lz(2j-\mq)}{2j+\ve-\lz}-\frac{\lz(j-\mq)}{(q-\lz)}-\frac{(r-2j-\ve-s_j)(\mq-\lz)e^{\frac{r-2j-\ve}{s_j}}}{2j+\ve-\lz}.\nonumber
\end{eqnarray}
In view of \eqref{ml}, for \eqref{dh4} to hold, we must have
\begin{equation}
	\frac{(2j-\mq)}{2j+\ve-\lz}-\frac{(j-\mq)}{(q-\lz)}>0.\nonumber
\end{equation}
Solve this for $\ve$ to deduce
\begin{eqnarray}
	\ve&<&\frac{q-\lz}{j-\mq}\left[2j-\mq-\frac{(j-\mq)(2j-\lz)}{q-\lz}\right]\nonumber\\
	&=&\frac{q-\lz}{j-\mq}\left[j-\frac{(j-\mq)(2j-q)}{q-\lz}\right],\nonumber
\end{eqnarray}
which is exactly our assumption \eqref{eu}.

In summary, the order in which we choose our parameters is as follows: Let $q$ be given as in \eqref{qmq}. By virtue of  \eqref{j2}, we may take
\begin{equation}
	j\in\left(\max\left\{\frac{q}{2},j_1\right\}, j_2\right).\nonumber
\end{equation}
This implies \eqref{jl} and  enables us to select $\ve$ as in \eqref{eu},  which, in turn,  guarantees \eqref{dh4} for  $r$ close to $2j+\ve+s_j$ from the right-hand side. That is, we have both \eqref{rl} and \eqref{dh4}. Equipped with this, we can choose $\eta\in(0,1)$ so that \eqref{dh3} is satisfied.

Without any loss of generality, we may assume
\begin{equation}\label{h4}
	\|w\|_{2j+\ve,Q_T}>1. 
\end{equation}
Otherwise, \eqref{wif} would be enough to imply our theorem. Under \eqref{h4}, we may suppose that the last exponent $\beta_2$ in \eqref{est12} is positive.
Were this not true, \eqref{h4} combined with \eqref{est12} would yield our theorem. 
In view of \eqref{qcon} and \eqref{lzc1}, we can form the interpolation inequality
\begin{equation}%\label{int3}
	\|w\|_{2j+\ve,Q_T}\leq \|w\|_{\infty,Q_T}^{\frac{2j+\ve-\lz}{2j+\ve}}\|w\|_{\lz,Q_T}^{\frac{\lz}{2j+\ve}}.\nonumber
\end{equation}
Collect this  in \eqref{est12} and keep in mind that $\beta_2>0$ to get
\begin{eqnarray}
	\|w\|_{\infty,Q_T}&\leq& 8I\|w(\cdot,0)\|_{\infty,\rn}+C\|w\|_{\lz,Q_T}^{s_1+\frac{\lz\beta_2}{2j+\ve}}\|w\|_{\infty,Q_T}^{\frac{(2j+\ve-\lz)\beta_2}{2j+\ve}},\nonumber
\end{eqnarray}
According to \eqref{dh6}, we have
\begin{equation}
	\frac{(2j+\ve-\lz)\beta_2}{2j+\ve}\in (0,1).\nonumber
\end{equation}
%If $\theta\leq 0$, we can argue as before to conclude \eqref{es}.
%If $	\theta>0$,
Then \eqref{cb} follows from a suitable application of Young's inequality.
%provided that $K$ is close to $3$. As we have just illustrated, this is enough for us 
The proof of Theorem \ref{thm} is completed.
\end{proof}

\begin{proof}[Proof of \eqref{ub}]

	%	Let $s$ be given as in the lemma.  and remember that we have taken $\nu=1$
 Set
	\begin{equation*}%\label{ard}
	A_r=\||u|^2\|_{r,Q_T}\ \ \mbox{	for $r>1$}. 	
	\end{equation*}
	Subsequently, let
	\begin{equation}\label{aw1}
		\vp=\frac{|u|^2}{A_r}.
	\end{equation}
	We proceed to derive an equation for $\vp$ as in \cite{X}. To this end,
	we take the dot product of both sides of \eqref{nsf1} with $u$ to obtain
	\begin{equation}\label{ns10}
	\pt u\cdot u+ (u\cdot\nabla) u\cdot u+\nabla p\cdot u=\Delta u\cdot u-\Psi\nabla\phi\cdot u.
	\end{equation}
	We calculate from \eqref{nsf4} that
	\begin{eqnarray*}
		u\cdot\pt u&=&\frac{1}{2}\pt |u|^2,\\
		(u\cdot\nabla) u\cdot u&=&u_j\pxj u_iu_i=\frac{1}{2}(u\cdot\nabla)|u|^2,\\
		\nabla p\cdot u&=&\nabla\cdot(pu),\\
		\Delta u\cdot u&=&	\frac{1}{2}\Delta|u|^2-|\nabla u|^2.
	\end{eqnarray*}
	Here we have employed the notation convention of summing over repeated indices.
	Substitute the preceding four equations into \eqref{ns10} and divide through the resulting equation by $\frac{A_r}{2}$ to derive
	\begin{equation}\label{use}
	\pt \vp+(u\cdot\nabla)\vp-\Delta\vp+ 2A_r^{-1}|\nabla u|^2=-2A_r^{-1}\nabla\cdot(pu)-2A_r^{-1}\Psi\nabla\phi\cdot u.	
	\end{equation}
We are in a position to employ the previous De Giorgi iteration scheme. Let
\begin{equation*}
	k\geq 2\|\vp(\cdot,0)\|_{\infty,\rn}.
\end{equation*}
Define $k_n$ as before. Use
$$\left(\ln{\vp}-\ln{k_{n}}\right)^+$$
as a test function in \eqref{use} to derive
\begin{eqnarray}
	\lefteqn{\frac{d}{dt}\irn \int_{k_n}^{\vp}\left(\ln\mu-\ln k_{n}\right)^+d\mu dx+\int_{\{\vp(\cdot,t)\geq k_{n}\}}\frac{1}{\vp}|\nabla\vp|^2dx}\nonumber\\
	&\leq&-\irn (u\cdot\nabla)\vp\left(\ln{\vp}-\ln{k_{n}}\right)^+dx+2A_r^{-1} \int_{\{\vp(\cdot,t)\geq k_{n}\}}\frac{p}{\vp} (u\cdot\nabla) \vp dx\nonumber\\
	&&-2A_r^{-1} \irn\Psi \nabla\phi\cdot u \left(\ln{\vp}-\ln{k_{n}}\right)^+dx.\label{pub1}
\end{eqnarray}
We proceed to analyze each term in the above inequality.
First, note from \eqref{nsf4} that
%As before, we have
\begin{eqnarray*}
-\irn (u\cdot\nabla)\vp\left(\ln{\vp}-\ln{k_{n}}\right)^+dx=-\irn (u\cdot\nabla)\vp\left(\ln{\vp}-\ln{k_{n}}\right)^+dx.
\end{eqnarray*}
In view of \eqref{aw1}, we have
%It follows from \eqref{nsf3}  and \eqref{nsf33} that
\begin{eqnarray}
	2A_r^{-1} \int_{\{\vp(\cdot,t)\geq k_{n}\}}\frac{p}{\vp} (u\cdot\nabla) \vp dx&\leq&2A_r^{-\frac{1}{2}}\int_{\{\vp(\cdot,t)\geq k_{n}\}}\frac{1}{\vp^{\frac{1}{2}}}|p| |\nabla \vp| dx\nonumber\\
	&\leq&\frac{1}{2}\int_{\{\vp(\cdot,t)\geq k_{n}\}}\frac{1}{\vp}|\nabla\vp|^2dx+2A_r^{-1}\int_{\{\vp(\cdot,t)\geq k_{n}\}}|p|^2dx.\nonumber%\label{p4ub3}
	%&=&-\frac{\beta z_i}{\beta-1}\irn \nabla\phi\cdot\nabla
	%\left(\frac{1}{k_{n}^{\beta-1}}-\frac{1}{\vp ^{\beta-1}}\right)^+ dx\nonumber\\
	% &=&-\frac{\beta z_i}{\beta-1}\irn \vp \left(\frac{1}{k_{n}^{\beta-1}}-\frac{1}{\vp ^{\beta-1}}\right)^+ dx.
\end{eqnarray}
Remember from \eqref{nsfa} that
\begin{equation*}%\label{pub}
	|\Psi|\leq \max_{1\leq i\leq I} |z_i|w\leq c.
\end{equation*}
In view of \eqref{nsf3}, we may apply the classical representation theorem (\cite{GT}, p. 17) to obtain
\begin{equation}\label{pre}
	\phi(x,t)=\irn\Gamma(y-x)\Psi(y,t) dy,
\end{equation}
where $\Gamma (x)$ is the fundamental solution of the Laplace equation, i.e.,
$$\Gamma(x)=\frac{1}{N(N-2)\omega_N |x|^{N-2}},\ \ \omega_N= \mbox{the volume of the unit ball in $\rn$.}$$
It immediately follows that
\begin{equation*}
	|\phi|\leq c\irn\frac{w}{|x-y|^{N-2}}dy,\ \ 	|\nabla\phi|\leq c\irn\frac{w}{|x-y|^{N-1}}dy.
\end{equation*}
This together with Theorem 1 in (\cite{S}, p. 119) implies
\begin{eqnarray}
	%	\|\phi\|_{\frac{Ns}{N-2s},\rn}&\leq& c\left\|w\right\|_{s,\rn}\ \ \mbox{for $s\in\left(1, \frac{N}{2}\right)$ and}\\
	\|\nabla\phi\|_{\frac{Ns}{N-s},\rn}&\leq& c\left\|w\right\|_{s,\rn}\ \ \mbox{for $s\in(1,N)$.} \nonumber%\label{nsf20}
\end{eqnarray}
%Moreover,
%Estimate  \eqref{pls} immediately follows from \eqref{cb}. Indeed,
%it is easy to see from \eqref{nsf20} that 
Hence, for each $s>\frac{N}{N-1}$ there holds
\begin{equation}\label{npe}
	\|\nabla\phi\|_{s,\rn}\leq c\|w\|_{\frac{Ns}{N+s},\rn}\leq c\|w\|_{\infty,\rn}^{1-\frac{N+s}{Ns}}\|w\|_{1,\rn}^\frac{N+s}{Ns}\leq c.
\end{equation}
%This gives \eqref{pls}.
The last term in \eqref{pub1} can be estimated as follows:
\begin{eqnarray*}
	\lefteqn{-2A_r^{-1} \irn\Psi \nabla\phi\cdot u \left(\ln{\vp}-\ln{k_{n}}\right)^+dx}\nonumber\\
	&\leq&cA_r^{-\frac{1}{2}}\irn |\nabla\phi|\sqrt{\vp}\left(\ln{\vp}-\ln{k_{n}}\right)^+dx\nonumber\\
%	&\leq&cA_r^{-\frac{1}{2}}\|\nabla\phi\|_{s,\rn}\|\sqrt{\vp}\|_{\frac{2N}{N-2},\rn}\nonumber\\
	&=&cA_r^{-\frac{1}{2}}\irn |\nabla\phi|\left(\sqrt{\vp}-\sqrt{k_n}\right)^+\left(\ln{\vp}-\ln{k_{n}}\right)^+dx\nonumber\\
	&&+cA_r^{-\frac{1}{2}}\sqrt{k_n}\irn |\nabla\phi|\left(\ln{\vp}-\ln{k_{n}}\right)^+dx\equiv I_1+I_2.
	%\nonumber\\
\end{eqnarray*}
Fix 
$$s>\frac{N}{2}.$$
Subsequently,
$$\frac{s}{s-1}<\frac{2N}{N-2}.$$
Also, it is easy to check from \eqref{aw1} that
$$\|\vp\|_{r,Q_T}=1.$$
Therefore,
$$\left|\{\vp\geq k_{n}\}\right|\leq \irnt\left(\frac{2\vp}{k}\right)^rdxdt\leq 2^rk^{-r}.$$
With these in mind, we calculate from  \eqref{sob} and \eqref{npe} that
\begin{eqnarray}
	I_1	&\leq&cA_r^{-\frac{1}{2}}\|\nabla\phi\|_{s,\rn}\left\|\left(\sqrt{\vp}-\sqrt{k_n}\right)^+\right\|_{\frac{2N}{N-2},\rn}\left\|\left(\ln{\vp}-\ln{k_{n}}\right)^+\right\|_{\frac{2Ns}{(N+2)s-2N},\rn}\nonumber\\
		&\leq&cA_r^{-\frac{1}{2}}\left(\int_{\{\vp(\cdot,t)\geq k_{n}\}}\frac{1}{\vp}|\nabla\vp|^2dx\right)^{\frac{1}{2}}\left\|\left(\ln{\vp}-\ln{k_{n}}\right)^+\right\|_{\frac{2N}{N-2},\rn}\left|\{\vp(\cdot,t)\geq k_{n}\}\right|^{\frac{(N+2)s-2N}{2Ns}-\frac{N-2}{2N}}\nonumber\\
	&\leq&cA_r^{-\frac{1}{2}}\left|\{\vp(\cdot,t)\geq k_{n}\}\right|^{\frac{2s-N}{Ns}}\left(\int_{\{\vp(\cdot,t)\geq k_{n}\}}\frac{1}{\vp}|\nabla\vp|^2dx\right)^{\frac{1}{2}}\left(\int_{\{\vp(\cdot,t)\geq k_{n}\}}\frac{1}{\vp^2}|\nabla\vp|^2dx\right)^{\frac{1}{2}}\nonumber\\
	&\leq&cA_r^{-\frac{1}{2}}k^{-\frac{1}{2}-\frac{r(2s-N)}{Ns}}\int_{\{\vp(\cdot,t)\geq k_{n}\}}\frac{1}{\vp}|\nabla\vp|^2dx	.\label{nes1}
\end{eqnarray}
We choose $k$ so large that the coefficient of the last integral in \eqref{nes1} is less than $\frac{1}{8}$, i.e.,
\begin{eqnarray}
\frac{1}{8}	k^{\frac{1}{2}+\frac{r(2s-N)}{Ns}}\geq cA_r^{-\frac{1}{2}}.\nonumber%\label{kcon2}
\end{eqnarray}
Consequently, 
\begin{eqnarray*}
	I_1\leq \frac{1}{8}\int_{\{\vp(\cdot,t)\geq k_{n}\}}\frac{1}{\vp}|\nabla\vp|^{2}dx.
\end{eqnarray*}
Similarly, 
\begin{eqnarray*}
I_2
	&=&cA_r^{-\frac{1}{2}}\sqrt{k_n}\irn |\nabla\phi|\left(\ln{\vp}-\ln{k_{n}}\right)^+dx\nonumber\\
	&\leq&cA_r^{-\frac{1}{2}}\sqrt{k_n}\left(\irn|\nabla\phi|^sdx\right)^{\frac{1}{s}}\left(\irn\left[\left(\ln{\vp}-\ln{k_{n}}\right)^+\right]^{\frac{s}{s-1}}dx\right)^{\frac{s-1}{s}}\nonumber\\
	&\leq&cA_r^{-\frac{1}{2}}\sqrt{k_n}\left(\irn\left[\left(\ln{\vp}-\ln{k_{n}}\right)^+\right]^{\frac{2N}{N-2}}dx\right)^{\frac{N-2}{2N}}\left|\{\vp(\cdot,t)\geq k_{n}\}\right|^{\frac{s-1}{s}-\frac{N-2}{2N}}\nonumber\\
	%	&\leq&cA_r^{-\frac{1}{2}}\sqrt{k_n}\left(\int_{\{\vp(\cdot,t)\geq k_{n}\}}\left(\frac{1}{\vp}|\nabla\vp|\right)^{\frac{Ns}{N(s-1)+s}}dx\right)^{\frac{N(s-1)+s}{Ns}}\nonumber\\
			&\leq&cA_r^{-\frac{1}{2}}\sqrt{k_n}\left(\int_{\{\vp(\cdot,t)\geq k_{n}\}}\frac{1}{\vp^2}|\nabla\vp|^{2}dx\right)^{\frac{1}{2}}\left|\{\vp(\cdot,t)\geq k_{n}\}\right|^{\frac{(N+2)s-2N}{2Ns}}\nonumber\\
		%	&\leq&cA_r^{-\frac{1}{2}}\sqrt{k_n}\left(\int_{\{\vp(\cdot,t)\geq k_{n}\}}\left(\frac{1}{\vp}|\nabla\vp|\right)^{2}dx\right)^{\frac{1}{2}}\left|\{\vp(\cdot,t)\geq k_{n}\}\right|^{\frac{N(s-1)+s}{Ns}-\frac{1}{2}}\nonumber\\
			&\leq&cA_r^{-\frac{1}{2}}\left(\int_{\{\vp(\cdot,t)\geq k_{n}\}}\frac{1}{\vp}|\nabla\vp|^{2}dx\right)^{\frac{1}{2}}\left|\{\vp(\cdot,t)\geq k_{n}\}\right|^{\frac{(N+2)s-2N}{2Ns}}\nonumber\\
		&\leq&\frac{1}{16}\int_{\{\vp(\cdot,t)\geq k_{n}\}}\frac{1}{\vp}|\nabla\vp|^{2}dx+cA_r^{-1}\left|\{\vp(\cdot,t)\geq k_{n}\}\right|^{\frac{(N+2)s-2N}{Ns}}	.
\end{eqnarray*}
Collecting all the preceding results in \eqref{pub1} yields
\begin{eqnarray}
	\lefteqn{	\frac{d}{dt}\irn\int_{k_{n}}^{\vp}\left(\ln{\mu}-\ln{k_{n}}\right)^+d\mu   dx+\frac{1}{16}\int_{\{\vp(\cdot,t)\geq k_{n}\}}\frac{1}{\vp}\left|\nabla \vp\right|^2 dx}\nonumber\\
	&\leq&%A_r^{-1}\int_{\{\vp(\cdot,t)\geq k_{n}\}}\frac{p}{\vp^{}}  (u\cdot\nabla)  \vp dx\nonumber\\
	cA_r^{-1}\int_{\{\vp(\cdot,t)\geq k_{n}\}} p^2dx+cA_r^{-1}\left|\{\vp(\cdot,t)\geq k_{n}\}\right|^{\frac{(N+2)s-2N}{Ns}}
	.\label{nes2}
\end{eqnarray}
We easily see that
\begin{equation*}
	\int_{\{\vp(\cdot,t)\geq k_{n}\}}\frac{1}{\vp}\left|\nabla \vp\right|^2 dx=4\irn\left|\nabla\left(\sqrt{\vp}-\sqrt{k_n}\right)^+\right|^2dx.
\end{equation*}
We can also infer from the proof of \eqref{hope10} that
\begin{equation*}
	\int_{k_{n}}^{\vp}\left(\ln{\mu}-\ln{k_{n}}\right)^+d\mu\geq 2\left[\left(\sqrt{\vp}-\sqrt{k_n}\right)^+\right]^2.
\end{equation*}
%Indeed, we easily calculate
%To continue to
Recall \eqref{4kcon3} and \eqref{kcon1} to derive
\begin{equation*}%\label{pub3}
	\left.\int_{k_{n}}^{\vp }\left(\ln{\mu}-\ln{k_{n}}\right)^+d\mu\right|_{t=0}=0.
\end{equation*}
%$$$$
Equipped with these estimates, we integrate \eqref{nes2} with respect to $t$ to deduce
\begin{eqnarray}
\lefteqn{	\sup_{0\leq t\leq T}\irn\left[\left(\sqrt{\vp}-\sqrt{k_n}\right)^+\right]^2dx	+\irnt\left|\nabla\left(\sqrt{\vp}-\sqrt{k_n}\right)^+\right|^2dxdt}\nonumber\\
	&\leq &cA_r^{-1}\int_{\{\vp\geq k_{n}\}} p^2dxdt+cA_r^{-1}\left|\{\vp\geq k_{n}\}\right|\equiv I.\nonumber
\end{eqnarray}
Here we have taken $s=N$.
% This completes the proof.
%\end{proof}
%To continue the proof of Theorem \ref{thm}, we 
Now set
\begin{equation*}%\label{yn2}
	y_n=|\{\vp\geq k_{n}\}|.
\end{equation*}
%$$$$
%Here we have used \eqref{kcon}.\irnt \left[\left(\frac{1}{ k_{n}^{\frac{\beta-1}{2}}}-\frac{1}{\vps^{\frac{\beta-1}{2}}}\right)^+\right]^2dxdt.
%This together with
%
We proceed to show that $\{y_n\}$ satisfies the condition in Lemma \ref{ynb}. 
By calculations similar to those in \eqref{ns4},  we have
%Combing \eqref{ub6} with the Sobolev embedding theorem yields\left|\{\vps\geq k_{n}\}\right|^{1-\frac{2}{q}}
\begin{eqnarray}
	\lefteqn{	\irnt\left[\left(\sqrt{\vp}-\sqrt{k_n}\right)^+\right]^{\frac{4}{N}+2}dxdt}\nonumber\\
	&\leq&\int_{0}^{T}\left(\irn\left[\left(\sqrt{\vp}-\sqrt{k_n}\right)^+\right]^{2}dx \right)^{\frac{2}{N}}\left(\irn\left[\left(\sqrt{\vp}-\sqrt{k_n}\right)^+\right]^{\frac{2N}{N-2}}dx\right)^{\frac{N-2}{N}}dt\nonumber\\
	&\leq& c\left(\sup_{0\leq t\leq T}\irn\left[\left(\sqrt{\vp}-\sqrt{k_n}\right)^+\right]^{2}dx \right)^{\frac{2}{N}}\irnt\left|\nabla\left(\sqrt{\vp}-\sqrt{k_n}\right)^+\right|^2dxdt\nonumber\\
	&\leq& cI^{\frac{N+2}{N}}.\label{prub3}
\end{eqnarray}
%Here we have applied \eqref{hop20} in the last step.
It is easy to verify that
\begin{eqnarray*}
	\irnt\left[\left(\sqrt{\vp}-\sqrt{k_n}\right)^+\right]^{\frac{4}{N}+2}dxdt&\geq&
	\int_{\{\vp\geq k_{n+1}\}}\left[\left(\sqrt{\vp}-\sqrt{k_n}\right)^+\right]^{\frac{4}{N}+2}dxdt\nonumber\\
	&\geq&\left(\sqrt{k_{n+1}}-\sqrt{k_n}\right)^{\frac{4}{N}+2}|\{\vp\geq k_{n+1}\}|
	\nonumber\\
	&\geq&c\left(\frac{\sqrt{k}}{2^{n+2}}\right)^{\frac{4}{N}+2}|\{\vp\geq k_{n+1}\}|
	\nonumber\\
	&	\geq&\frac{c|\{\vp\geq k_{n+1}\}|k^{\frac{(N+2)}{N}}}{2^{(\frac{4}{N}+2)n}}.%\label{jan3}
	%\irnt \vp ^{\frac{2q}{q-1}}\left[\left(1-\frac{k_n}{\vp }\right)^+\right]^{\frac{2q}{q-1}}dxdt\nonumber\\
	%	&\geq&\left(1-\frac{k_n}{k_{n}}\right)^{\frac{2q}{q-1}}\int_{\{v\geq k_{n}\}}v^{\frac{2q}{q-1}}dxdt\nonumber\\
	%	&\geq&\frac{1}{2^{\frac{2(n+2)q}{q-1}}}\int_{\{v\geq k_{n}\}}v^{\frac{2q}{q-1}}dxdt.\nonumbercA_r^{-1}
\end{eqnarray*}
Combining this with \eqref{prub3} yields
\begin{eqnarray}
	y_{n+1}&=&|\{\vp\geq k_{n+1}\}|^{\frac{N}{N+2}+\frac{2}{N+2}}\nonumber\\
	&\leq& c\frac{4^{n}I}{k}|\{\vp\geq k_{n+1}\}|^{\frac{2}{N+2}}\nonumber\\
	&=& \frac{c4^{n}}{A_rk}\left(\int_{\{\vp\geq k_{n}\}} p^2dxdt+\left|\{\vp\geq k_{n}\}\right|\right)\{\vp\geq k_{n+1}\}|^{\frac{2}{N+2}}.\label{prub4}
\end{eqnarray}

Now we turn our attention to $p$. Take the divergence of both sides of \eqref{nsf1} to obtain
\begin{equation*}
	-	\Delta p=\nabla\cdot (\Psi\nabla\phi)+\nabla\cdot((u\cdot\nabla) u).
\end{equation*}
As in \eqref{pre}, we can also represent $p$ as 
\begin{equation*}
	p(x,t)=\irn \Gamma(y-x)\left[\nabla\cdot (\Psi\nabla\phi)+\nabla\cdot((u\cdot\nabla) u)\right]dy.
\end{equation*} 
We observe from \eqref{nsf4} that
\begin{equation*}
	\irn \Gamma(y-x)\nabla\cdot((u\cdot\nabla) u)dy=\irn \Gamma_{y_iy_j}(y-x)u_iu_j dy.
\end{equation*}
It is a well known fact that $\partial^2_{y_iy_j}\Gamma(y)$ is a Calder\'{o}n-Zygmund kernel. A result of \cite{CFL} asserts that for each $ \ell \in (1,\infty)$ there is a positive number $c_\ell$ determined by $N$ and $\ell $ such that 
\begin{equation*}
	\left\|\irn \Gamma_{y_iy_j}(y-x)u_iu_j dy\right\|_{\ell,\rn}\leq c_\ell\||u|^2\|_{\ell,\rn}.
\end{equation*}
Combing this with Theorem 1 in (\cite{S}, p.119) yields
\begin{equation}\label{use1}
	\|p\|_{\ell,\rn}\leq c\|\Psi\nabla\phi\|_{\frac{N\ell}{N+\ell},\rn}+c\||u|^2\|_{\ell,\rn}\ \ \mbox{for each $\ell>\frac{N}{N-1}$}.%\leq c\||u|^2\|_{\ell,\rn}+c.
\end{equation}
Observe from \eqref{cb} and \eqref{pls} that
\begin{equation*}
	\|\Psi\nabla\phi\|_{\frac{N\ell}{N+\ell},\rn}\leq c, \ \ 
\end{equation*}
provided that
\begin{equation*}
\ell>\frac{N}{N-2}.	
\end{equation*}
As before, we pick 
\begin{equation*}
	q>N+2.
\end{equation*}
Subsequently, by \eqref{use1}, we have
\begin{equation*}
	\int_{\{\vp\geq k_{n}\}}p^2dxdt\leq \|p\|_{q,Q_T}^2|\{\vp\geq k_{n}\}|^{1-\frac{2}{q}}\leq c(1+\|u\|_{2q,Q_T}^4)|\{\vp\geq k_{n}\}|^{1-\frac{2}{q}}.
\end{equation*}
Substitute this into \eqref{prub4} to get
\begin{equation}\label{hap1}
	y_{n+1}\leq \frac{c4^{n}}{A_rk}\left(1+\|u\|_{2q,Q_T}^4+\left|\{\vp\geq k_{n}\}\right|^{\frac{2}{q}}\right)\{\vp\geq k_{n+1}\}|^{1-\frac{2}{q}+\frac{2}{N+2}}.
\end{equation}
Obviously, we may assume that
\begin{equation}\label{hap}
\|u\|_{2q,Q_T}^4\geq 1+\left|\left\{\vp\geq \frac{k}{2}\right\}\right|^{\frac{2}{q}}.	
\end{equation}
%$$$$
Consequently,
\begin{equation*}
	y_{n+1}\leq \frac{c4^{n}\|u\|_{2q,Q_T}^4}{A_rk}\{\vp\geq k_{n+1}\}|^{1-\frac{2}{q}+\frac{2}{N+2}}.
\end{equation*}
We are in a position to repeat our earlier argument.
If \eqref{hap} is not true, we can use the result in \eqref{hap1} to obtain the boundedness of $u$. The proof is rather standard. We shall omit here. The proof of \eqref{ub} is now complete.  
\end{proof}

%\noindent{\bf Conflict of interest statement:} There is no conflict of interest.

\end{document}